\newcolumntype{?}{!{\vrule width 1pt}}
\newif\ifincludeprevious
\newtheorem{theorem}{Theorem}
\newtheorem{proposition}[theorem]{Proposition}
\newtheorem{lemma}[theorem]{Lemma}
\newtheorem{corollary}[theorem]{Corollary}
\newtheorem*{claim}{Claim}
\newtheorem{definition}[theorem]{Definition}
\newtheorem*{problem}{Problem}
\newtheorem{remark}[theorem]{Remark}
\newtheorem{example}[theorem]{Example}
\newcommand{\xx}{1}
\newcommand{\yy}{1}
\newcommand{\bt}{\tikz{\node[shape=circle,draw,inner sep=2pt] {};}}
\newcommand{\ls}[1]{{\footnotesize{#1}}}
\newcommand{\stage}[2]{\tikz[baseline=(char.base)]{
            \node[shape=circle,draw,inner sep=0.5pt,fill={#1}] (char) {${\phantom{#2}}$};}}
\newcommand{\PP}{\mathbb{P}}
\newcommand{\RR}{\mathbb{R}}
\newcommand{\ZZ}{\mathbb{Z}}
\newcommand{\NN}{\mathbb{N}}
\newcommand{\T}{\mathcal{T}}
\DeclareMathOperator{\lcm}{lcm}
\DeclareMathOperator{\im}{im}
\newenvironment{subproof}[1][\proofname]{%
  \begin{proof}[#1]%
}{%
  \end{proof}%
}
\begin{document}

\begin{frontmatter}

\title{Discrete Statistical Models  with \\ Rational Maximum Likelihood Estimator}

\runtitle{Discrete Statistical Models with Rational MLE}

\begin{aug}

\author{\fnms{Eliana} \snm{Duarte}\thanksref{t1,t5}\ead[label=e1]{eliana.duarte@ovgu.de}},
\author{\fnms{Orlando} \snm{Marigliano}\corref{}\thanksref{t4,t2}\ead[label=e2]{orlando.marigliano@mis.mpg.de}}
\and
\author{\fnms{Bernd} \snm{Sturmfels}\thanksref{t4,t3}\ead[label=e3]{bernd@mis.mpg.de}}

\address[t4]{Max-Planck-Institut f\"ur Mathematik in den Naturwissenschaften,\\ Inselstra\ss e 22, 04103 Leipzig, Germany. \printead{e2,e3}}
\address[t1]{Fakult\"at f\"ur Mathematik, Otto-von-Guericke Universit\"at Magdeburg,\\ 39106 Magdeburg, Germany. \printead{e1}}
\runauthor{E. Duarte, O. Marigliano and B. Sturmfels}

\affiliation{Max Planck Institute for Mathematics in the Sciences}

\thankstext{t5}{Partial affiliation with Max-Planck-Institut f\"ur Mathematik in den Naturwissenschaften, Leipzig}
\thankstext{t2}{Corresponding author}
\thankstext{t3}{Partial affiliation with University of California, Berkeley}

\end{aug}

\begin{abstract}
A discrete statistical model is a subset of a probability simplex. Its maximum likelihood
estimator (MLE) is a retraction from that simplex onto the model. We characterize all models
for which this retraction is a rational function. This is a contribution via real algebraic geometry
which rests on results on Horn uniformization due to Huh and Kapranov.
We present an algorithm for constructing models with rational MLE,
and we demonstrate it on a range of instances. Our focus lies on models
familiar to statisticians, like Bayesian networks, decomposable graphical models, and staged trees.
\end{abstract}

\begin{keyword}
\kwd{Maximum likelihood estimator}
\kwd{graphical models}
\kwd{algebraic statistics}
\kwd{likelihood geometry}
\kwd{real algebraic geometry}
\kwd{discrete statistical models}
\end{keyword}


\end{frontmatter}

\vspace{-1.5em}
\small{\emph{MSC 2010 Subject Classification:} 62F10, 13P25, 14P10, 14M25.}

\section{Introduction}

A {\em discrete statistical model} \  is a subset 
$\mathcal{M}$ of the open probability simplex  $\Delta_{n}$. Each point $p$ in $\Delta_n$
is a probability distribution on the finite state space $\{0,1,\ldots,n\}$,~i.e.~$p = (p_0,p_1,
\ldots,p_n),$ where the $p_i$ are positive real numbers that satisfy
$p_0+p_1 + \cdots + p_n=1$. The model $\mathcal{M}$ is the set of all distributions 
$p \in \Delta_n$ that are relevant for an application.

In data analysis we are given an empirical distribution
$u = (u_0,u_1,\ldots,u_n)$. This is the point in  $\Delta_n$
whose $i$th coordinate $u_i$ is the fraction of samples in state~$i$.
The {\em maximum likelihood estimator} (MLE) of $\mathcal{M}$ is a
function $\Phi\colon \Delta_n \rightarrow \mathcal{M}$ that
takes the empirical distribution $u$ to a distribution
 $\hat p = (\hat p_0,\hat p_1,\ldots,\hat p_n)$ that 
 best explains the given observations.
 Here ``best'' is understood in the sense of likelihood inference, so that $\hat p = \Phi(u)$ is the point in $\mathcal{M}$
that maximizes the {\em log-likelihood function}
 $ p \mapsto \sum_{i=0}^n u_i \cdot {\rm log}(p_i)$.
 For any vector $u$ in $\RR^{n+1}_{>0}$,
  we set  $\Phi(u) := \Phi(u/|u|)$ where $|u| = u_0+\cdots+u_n$.
 
Likelihood inference is consistent. This means that $\Phi(u) = u$ for $u \in \mathcal{M}$.
This follows from the fact that the log-likelihood function is 
 strictly concave on $\Delta_n$ and its unique maximizer is $p = u$.
Hence, the MLE $\Phi$ is a retraction from the simplex onto the model. 
 
 This point is fundamental for two fields at the crossroads of mathematics
 and data science. {\em Information Geometry} \cite{Ay} views
 the MLE as the nearest point map of a Riemannian metric on $\Delta_n$,
 given by the Kullback-Leibler divergence of probability distributions.
{\em Algebraic Statistics} \cite{DSS, sullivant2019} is concerned with models $\mathcal{M}$
whose MLE $\Phi$ is an algebraic function of $u$. This happens when
the constraints that define $\mathcal{M}$ are given in
terms of polynomials in $p$.
In this article we address a question that is fundamental for both  fields:
\\ {\em For which models $\mathcal{M}$ is the MLE $\,\Phi$ a
rational function in the empirical distribution  $u$?}  

The most basic example where the MLE is rational 
is the independence model for two binary random variables $(n=3)$.
Here $\mathcal{M}$ is a surface in the tetrahedron $\Delta_3$. That surface is a
familiar picture that serves as a point of entry for both
Information Geometry and Algebraic Statistics.
Points in $\mathcal{M}$  are positive rank one $2\times 2$ matrices 
\begin{small} $\begin{bmatrix} p_0 & p_1 \\
	p_2 & p_3 \end{bmatrix}$\end{small} whose entries sum to one.
	The data takes the form of
		a nonnegative integer $2 \times 2$ matrix $u$ of counts of observed frequencies. Hence
	$ \,|u| = u_0{+}u_1{+}u_2{+}u_3$ is the sample size,  and $u/|u|$ is
	the empirical distribution. The MLE $\hat p = \Phi(u)$ 
is evaluated by multiplying the row and column~sums of $u$:
$$ \begin{matrix}
\hat p_0 = \frac{(u_0 {+} u_1)(u_0{+}u_2)}{|u|^2}  , \,\,\,
\hat p_1 = \frac{(u_0 {+} u_1)(u_1{+}u_3)}{|u|^2}  , \, \\
\hat p_2 = \frac{(u_2 {+} u_3)(u_0{+}u_2)}{|u|^2}  , \,\,\,
\hat p_3 = \frac{(u_2 {+} u_3)(u_1{+}u_3)}{|u|^2} .
\end{matrix}
$$
These four expressions are rational, homogeneous of degree zero,
and their sum is equal to~$1$.
See \cite[Example 2]{huh14}
for a discussion of these formulas from our present perspective.

The surface $\mathcal{M}$ belongs to the class of graphical models \cite{lauritzen}.
Fix an undirected graph $G$ whose nodes represent random variables
with finitely many states.
The undirected graphical model $\mathcal{M}_G$ is a subset 
of $\Delta_n$, where $n{+}1$ is the number of states in the joint distribution.
The graphical model $\mathcal{M}_G$ is \emph{decomposable} if and only 
if the graph $G$ is chordal. 
Each coordinate $\hat p_i$ of its MLE is 
an alternating product of linear forms
given by maximal cliques and minimal separators of~$G$.
A similar formula exists for directed graphical models,
which are  also known as Bayesian networks.

In both cases, the coordinates of the MLE are not only rational functions, but even alternating products
of linear forms in $u = (u_0,u_1,\ldots,u_n)$.
This is no coincidence. Huh \cite{huh14} proved that
if $\Phi$ is a rational function then each of its coordinates is an
alternating product of linear forms, with numerator and denominator
of the same degree. Huh further showed that this alternating product must take a very
specific shape. That shape was discovered by Kapranov~\cite{Kap91},
who named it the {\em Horn uniformization}.
The results by Kapranov and Huh are valid for arbitrary
complex algebraic varieties. They make no reference
to a context where  the coordinates are real, positive, and add up to~$1$.

The present paper makes the
 leap from complex varieties back to statistical models. Building on the remarkable constructions by Kapranov and Huh,  we here work in the setting of
 real algebraic geometry that is required for statistical applications.
Our main result (Theorem~\ref{thm:main}) characterizes all models $\mathcal{M}$ in
$\Delta_n$ whose MLE is a rational function. It is stated in Section~2 and all its ingredients are presented in a self-contained manner.

In Section~3 we examine models with rational MLE that are familiar to
statisticians, such as decomposable graphical models and
Bayesian networks. Our focus lies on {\em staged tree models},
a far-reaching generalization of discrete Bayesian networks, described
in the book by Collazo, G\"orgen and Smith \cite{CGS}.
 We explain how our main result applies to these models.
 The proof of Theorem~\ref{thm:main} is presented in Section~4.
 This is the technical heart of our paper, building on the
 likelihood geometry of \cite[\S 3]{huh2014likelihood}.
 We also discuss the connection to toric geometry and geometric modeling
 developed by Clarke and Cox \cite{clarke2018moment}.
In Section~5 we present our algorithm
for constructing models with rational MLE, and we discuss
its implementation and some experiments.
The input is an integer matrix representing a toric variety, and the output is
a list of models derived from that matrix. 
Our results suggest that only a very small fraction of Huh's varieties in \cite{huh14}
are statistical models.

\section{How to be Rational}

Let $\mathcal{M}$ be a discrete statistical model in the open simplex
$\Delta_n$ that has a well-defined maximum likelihood estimator
$\Phi : \Delta_n \rightarrow \mathcal{M}$. We also write
$\Phi : \RR^{n+1}_{> 0} \rightarrow \mathcal{M}$ for the induced
map $u \mapsto \Phi(u/|u|)$ on positive vectors.
If the $n+1$ coordinates of $\Phi$ are rational functions in $u$,
 then we say that $\mathcal{M}$ 
{\em has rational MLE}.
The following is our main result.

\begin{theorem} \label{thm:main}
The following are equivalent for the statistical model
$\mathcal{M} $ with MLE~$\Phi$:
\begin{itemize}
\item[(1)] The model $\mathcal{M}$ has {\bf rational MLE}. 
\item[(2)] There exists a {\bf Horn pair} $(H,\lambda)$ such that $\mathcal{M}$ is the image of the Horn map
$$\varphi_{(H,\lambda)} : \RR^{n+1}_{>0} \to \RR^{n+1}_{>0}.$$
\item[(3)] There exists a {\bf discriminantal triple}
$(A,\Delta,{\bf m})$ such that $\mathcal{M}$ is the image 
under the  monomial map $\phi_{(\Delta,{\bf m})}$ of precisely
one orthant (\ref{eq:orthantdef}) of the dual toric variety~$Y_A^*$. 
\end{itemize}
The MLE of the model satisfies the following relation on the open orthant $\RR^{n+1}_{>0}$\rm{:}\begin{equation}
\label{eq:threemaps}
\Phi \,= \, \varphi_{(H,\lambda)} \,= \, \phi_{(\Delta,\bf m)} \circ H.
\end{equation}
\end{theorem}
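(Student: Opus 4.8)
The plan is to establish the cycle of implications $(1)\Rightarrow(2)\Rightarrow(3)\Rightarrow(1)$, reading off the identification (\ref{eq:threemaps}) of the three maps as a byproduct. The backbone of the argument is Huh's theorem that a rational MLE is necessarily an alternating product of linear forms of the Horn shape discovered by Kapranov \cite{huh14, Kap91}, combined with the likelihood geometry of \cite[\S 3]{huh2014likelihood}. What is genuinely new, and what the proof must supply, is the bookkeeping that upgrades these complex-algebraic statements to statements about the real positive simplex $\Delta_n$: certifying positivity of the relevant parametrizations, the identity that their coordinates sum to $1$, and the fact that exactly one sign orthant of a dual toric variety is a statistical model.

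For $(1)\Rightarrow(2)$ I would assume that the $n+1$ coordinates of $\Phi$ are rational in $u$, hence by \cite{huh14} alternating products of linear forms with numerator and denominator of equal degree, and moreover of the Horn form of \cite{Kap91}. From this presentation I read off an integer matrix $H$ with zero column sums (recording that $\Phi$ is homogeneous of degree zero) and a real scaling-and-sign vector $\lambda$, and I check that $(H,\lambda)$ is a \textbf{Horn pair}: the essential points are that $\varphi_{(H,\lambda)}$ carries $\RR^{n+1}_{>0}$ into $\RR^{n+1}_{>0}$ and that $\sum_i (\varphi_{(H,\lambda)})_i \equiv 1$. Consistency of likelihood inference, $\Phi|_{\mathcal{M}}=\mathrm{id}$, then forces the image of $\varphi_{(H,\lambda)}$ to be precisely $\mathcal{M}$, and $\Phi=\varphi_{(H,\lambda)}$ on $\RR^{n+1}_{>0}$.

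For $(2)\Rightarrow(3)$ I would invoke Kapranov's description of the image of a Horn map: the Horn matrix $H$ factors $\varphi_{(H,\lambda)}$ as the linear map $u\mapsto Hu$ followed by a monomial map, and the closure of the image is a dual toric variety $Y_A^*$ for a matrix $A$ built from $H$, reparametrized by a monomial map $\phi_{(\Delta,\mathbf m)}$ attached to the $A$-discriminant $\Delta$ and an exponent vector $\mathbf m$. This gives the factorization $\varphi_{(H,\lambda)} = \phi_{(\Delta,\mathbf m)}\circ H$ in (\ref{eq:threemaps}). The new ingredient is the real refinement: the real locus of $Y_A^*$ breaks into finitely many orthants by sign pattern (\ref{eq:orthantdef}), and positivity of $\varphi_{(H,\lambda)}$ together with the normalization identity selects a single such orthant whose image under $\phi_{(\Delta,\mathbf m)}$ lands in $\Delta_n$ and equals $\mathcal{M}$; I would check that this orthant is well defined and is the unique one with this property.

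For $(3)\Rightarrow(1)$, if $\mathcal{M}$ is the image of one orthant of $Y_A^*$ under $\phi_{(\Delta,\mathbf m)}$, then composing with the Horn uniformization of $Y_A^*$ exhibits an explicit rational self-map $\psi$ of $\Delta_n$ with image $\mathcal{M}$; to identify $\psi$ with $\Phi$ I would verify that $\psi(u)$ solves the score (likelihood) equations of $p\mapsto \sum_{i=0}^n u_i\log p_i$ at every $u\in\RR^{n+1}_{>0}$, so that strict concavity and uniqueness of the critical point give $\psi=\Phi$, proving $\mathcal{M}$ has rational MLE. The main obstacle throughout is exactly this traffic between the two worlds: showing the Horn map is positivity-preserving and satisfies $\sum_i(\cdot)_i\equiv 1$, showing that precisely one sign orthant survives, and matching an explicit rational formula with the variational definition of the MLE. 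Once these real-positivity points are in place, the underlying algebraic content is supplied by \cite{huh14}, \cite{Kap91}, and \cite{huh2014likelihood}.
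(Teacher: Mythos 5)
Your overall strategy — lean on Huh \cite{huh14} and Kapranov \cite{Kap91} for the complex-algebraic content, and then supply the real/positivity bookkeeping to descend to $\Delta_n$ — is the same as the paper's, and you correctly identify the three places where new work is required. However, the proposal stays at the level of ``I would check that this orthant is well defined'' and ``I would verify that $\psi(u)$ solves the score equations,'' and both of these checks are exactly where the proof needs a nontrivial argument that the proposal does not provide.

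The first gap is the positivity. You need to know that if $(H,\lambda)$ is friendly and $\varphi_{(H,\lambda)}$ sends a single positive vector into $\Delta_n$, then it sends every positive vector into $\Delta_n$; this is what makes the sign vector $\sigma = \operatorname{sign}(Hu)$ constant on the open orthant and therefore makes ``the'' orthant $Y^*_{A,\sigma}$ well defined. The paper isolates this as a key lemma (the ``positivity domino''): pass to $\PP^n$ so that the closure of $U := \varphi^{-1}(\Delta_n)$ is compact, note that the likelihood function $L_v$ for an arbitrary positive data vector $v$ vanishes on $\partial U$, and conclude by compactness that $L_v$ has an interior critical point — which, by a second key lemma, must equal $v$, forcing $v \in U$. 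Without an argument of this kind there is nothing preventing $\operatorname{sign}(Hu)$ from flipping as $u$ ranges over $\RR^{n+1}_{>0}$, and the orthant in part (3) would not be well defined. Your proposal names the issue but not the mechanism.

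The second gap is the uniqueness of the critical point. You appeal to ``strict concavity and uniqueness of the critical point'' to match $\psi$ with $\Phi$. Strict concavity of $p \mapsto \sum_i u_i \log p_i$ on the open simplex gives a unique global maximizer of the \emph{restriction} to $\mathcal{M}$, but it does not imply the restricted likelihood has a unique critical point (a subvariety can carry saddles and local maxima), so ``solving the score equations'' does not by itself single out the MLE. The correct replacement — and what the paper uses — is a direct computation: with $L_u(v) := \prod_i \varphi_i(v)^{u_i}$, the partial derivatives $\partial L_u / \partial v_\ell$ evaluate to zero at $v=u$ precisely because the columns of $H$ sum to zero, and uniqueness of the critical point (up to scaling) is then supplied by the fact that the Zariski closure of the image has ML-degree one, i.e.\ by \cite[Theorem~1]{huh14}, not by concavity. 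This computation is also what certifies consistency $\Phi|_{\mathcal{M}} = \mathrm{id}$ and powers the positivity-domino lemma above, so it cannot be omitted. Finally, in the $(2)\Leftrightarrow(3)$ step the proposal simply ``invokes Kapranov''; the paper actually has to prove the Claim that friendliness of $(H,\lambda)$ is equivalent to $\Delta$ being $A$-homogeneous and vanishing on $Y_A^*$, the nontrivial direction being the explicit verification that for $x = Hu$ the point $t = (1,\dots,1)$ is a singular point of $\gamma_A^{-1}(H_x \cap Y_A)$, using $AH = 0$ and the zero column sums of $H$.
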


This theorem matters for statistics because it reveals
when a model has an MLE of the simplest possible
closed form. Property (2) says that the polynomials
appearing in the numerators and denominators of the rational
formulas must factor into linear forms with positive coefficients.
Property (3) offers a recipe, based on toric geometry,
for explicitly constructing such models.
The advance over \cite{huh14} is that
Theorem~\ref{thm:main} deals with positive real numbers.
It hence furnishes the definitive solution
in the case of applied interest.

 The goal of this section is to define all the terms seen in parts (2) and (3) of
Theorem~\ref{thm:main}.

\begin{example} \label{ex:smalltree} 
We first discuss Theorem~\ref{thm:main} for a simple experiment:
{\em Flip a biased coin. If it shows heads, flip it again}. 
This is the model with $n=2$ given by the tree diagram

\begin{center}
\begin{tikzpicture}
\renewcommand{\xx}{1.3}
\renewcommand{\yy}{0.7}

\node (r) at (0*\xx,1*\yy) {\stage{ProcessBlue}{1}};

\node (b0) at (2*\xx,2*\yy) {\stage{ProcessBlue}{1}};
\node (b1) at (2*\xx,0*\yy) {\bt};

\node (a0) at (4*\xx,3*\yy) {\bt};
\node (a1) at (4*\xx,1*\yy) {\bt};

\draw[->] (r) -- node [above] {\ls{$s_0$}} (b0);
\draw[->] (r) -- node [below] {\ls{$s_1$}} (b1);

\draw[->] (b0) -- node [above] {\ls{$s_0$}} (a0);
\draw[->] (b0) -- node [below] {\ls{$s_1$}} (a1);

\node [right, xshift=5] at (a0) {$p_0$};
\node [right, xshift=5] at (a1) {$p_1$};
\node [right, xshift=5] at (b1) {$p_2.$};

\end{tikzpicture}
\end{center}
 The model $\mathcal{M}$ is a curve in the probability triangle $\Delta_2$.
 The tree shows its parametrization  
 $$ \qquad  \Delta_1\to \Delta_2 \,, \,\,
  (s_0,s_1)\mapsto(s_0^2,s_0s_1,s_1) 
  \qquad \hbox{where $s_0,s_1 > 0$ and $s_0+s_1=1$.} $$
  The implicit representation of the curve $\mathcal{M}$ is the equation
   $p_0p_2-(p_0+p_1)p_1=0$.
  Let $(u_0,u_1,u_2)$ be the counts from repeated experiments.
 A total of $2u_0 + 2u_1 + u_2$ coin tosses were made.
 We estimate the parameters as the empirical frequency of heads resp.~tails:
$$
	\hat s_0 \,=\, \frac{2u_0 + u_1}{2u_0 + 2u_1 + u_2} \quad \text{and}\quad 
	\hat s_1 \,=\, \frac{u_1+u_2}{2u_0 + 2u_1 + u_2}.
$$
The MLE is the retraction from the triangle $\Delta_2$ to the curve $\mathcal{M}$
given by the formula
$$ \Phi(u_0,u_1,u_2)\,\,=\,\, (\hat s_0^2, \hat s_0 \hat s_1, \hat s_1) \,\,=\,\,\,
\begin{small}
\biggl(
\frac{(2u_0 + u_1)^2}{(2u_0 {+} 2u_1 {+} u_2)^2} \,,\,
\frac{(2u_0 {+} u_1)(u_1{+}u_2)}{(2u_0 + 2u_1 + u_2)^2}\, ,\,
 \frac{u_1+u_2}{2u_0 {+} 2u_1 {+} u_2} \biggr).
 \end{small}
$$
Hence $\mathcal{M}$ has rational MLE.
We see that the Horn pair from part (2) in Theorem~\ref{thm:main}  has
$$	H \,= \,\begin{small} \begin{pmatrix}
	\phantom{-}2 & \phantom{-}1 & \phantom{-}0 \,\,\, \\
	\phantom{-}0 & \phantom{-}1 & \phantom{-}1 \, \,\,\\
	-2& -2& -1 \,\,\,
	\end{pmatrix}\end{small} \quad {\rm and} \quad \lambda \,= \,(1,1,-1). $$
We next exhibit the discriminantal triple
$(A,\Delta,\bf m)$ in
part (3) of Theorem~\ref{thm:main}.
The matrix  $A = \begin{pmatrix}1 & 1 & 1\end{pmatrix}$
gives a basis of the left kernel of $H$. The second entry is the polynomial
\begin{equation}
\label{eq:DeltaFactors} \Delta \,\, =\,\, x_3^2 - x_1^2 - x_1x_2 + x_2x_3 \,\, =\,\, 
(x_3-x_1)(x_1+x_2+x_3). \end{equation}
The third entry marks the leading term ${\bf m}= x_3^2$. These data define the monomial map
$$ \phi_{(\Delta,{\bf m})} \,\, :\,\,
(x_1,x_2,x_3) \,\mapsto \,
\biggl(\,\frac{x_1^2}{x_3^2} \,,\, \frac{x_1x_2}{x_3^2} \, \,,-\frac{x_2}{x_3} \biggr).$$

The toric variety of the matrix $A$ is the point
$Y_A  = \{(1:1:1)\}$ in $\PP^2$.  Our polynomial $\Delta$
 vanishes on the line 
 $Y_A^{*} = \{x_1+x_2+x_3 =0 \}$ that is dual to $Y_A$.
 The relevant orthant is the open line segment
 $Y^{*}_{A,\sigma} \coloneqq \{(x_1:x_2:x_3) \in Y_A^* \,: \, x_1,x_2 >0 \,\,{\rm and} \,\, x_3 <0 \}$.
Part (3) in Theorem \ref{thm:main} says that $\mathcal{M}$ is the image of
$Y^*_{A,\sigma}$ under  $\phi_{(\Delta,{\bf m})}$.
The MLE is $\Phi = \phi_{(\Delta,\bf m)} \circ H$.
\end{example}

We now come to the  definitions needed for Theorem \ref{thm:main}.
Let $H = (h_{ij})$ be an $m \times (n{+}1)$ integer matrix whose columns sum to zero,
i.e.~$\,\sum_{i=1}^m h_{ij} = 0$ for $j=0,\ldots,n$.
We call such a matrix a \emph{Horn matrix}
and denote its columns by $h_0,h_1,\ldots,h_n$.
The following alternating products of linear forms 
are rational functions of degree zero:
$$ (Hu)^{h_j} \,\,:=\,\,\prod_{i=1}^m \bigl(h_{i0} u_0 + h_{i1} u_1 + \cdots + h_{in} u_n\bigr)^{h_{ij}}
\qquad {\rm for} \,\, j=0,1,\ldots,n. $$
We use the notation $v^h \coloneqq \prod_i v_i^{h_i}$ for two vectors $v,h$ of the same size.
The Horn matrix $H$ is {\em friendly} if there 
	exists a real vector $\,\lambda = (\lambda_0,\ldots,\lambda_n)$ with $\lambda_i\neq 0$ for all $i$ such that the following identity holds in the 
	rational function field $\RR(u_0,u_1,\ldots,u_n)$:
	\begin{equation}
	\label{eq:friendly} \lambda_0  (Hu)^{h_0}  + \lambda_1 (Hu)^{h_1}+ \cdots 
	+ \lambda_n (Hu)^{h_n} \,\, = \,\, 1 .
	\end{equation}
	If this holds, then we call $(H,\lambda)$  a {\em friendly pair},
	and we consider the rational function
\begin{equation}
\label{eq:rationalmap}
		\RR^{n+1} \,\to \,\RR^{n+1} ,\,\,
		u \, \mapsto \,	\bigl(
		\lambda_0  (Hu)^{h_0}  , \,\lambda_1 (Hu)^{h_1}, \,\ldots ,\,
	 \lambda_n (Hu)^{h_n} \bigr).
\end{equation}
The friendly pair $(H,\lambda)$ is called a {\em Horn pair}
if 
the function (\ref{eq:rationalmap}) is
defined for all positive vectors, and it maps these to positive vectors.
If these conditions hold then we write 	
$\,\varphi_{(H,\lambda)} : \RR^{n+1}_{>0} \,\to \,\RR^{n+1}_{>0} \,$
for the restriction of (\ref{eq:rationalmap}) to the positive orthant. We
call $\,\varphi_{(H,\lambda)}\,$ the {\em Horn map} associated to the 
Horn pair $(H,\lambda)$.

The difference between our
 Horn pairs and the more general pairs considered by Huh in~\cite{huh14}
is
the positivity condition we just introduced, along with the ``friendliness'' condition.
These conditions guarantee that the image of the Horn map lies in the probability simplex,
which is necessary for its interpretation as a statistical model.
They also imply special properties for the Horn pair, see Propositions~\ref{prop:positivity-domino} and~\ref{cor:sigmaexists} in Section~\ref{sec:proof-main}. The examples in Section~\ref{sec:constructing-models} show that only a fraction of Huh's pairs $(H,\lambda)$ are Horn pairs.

Different Horn pairs may give rise to the same Horn map. For example, the Horn pair
\[
H' \,= \,\begin{small} \begin{pmatrix}
	\phantom{-}0 & \phantom{-}2 & \phantom{-}2 \\
	\phantom{-}2 & \phantom{-}1 & \phantom{-}0 \,\,\, \\
	\phantom{-}0 & -1 & -1 \, \,\,\\
	-2& -2& -1 \,\,\,
	\end{pmatrix}\end{small} \quad {\rm and} \quad \lambda' \,= \,\left(1,-\frac{1}{4},\frac{1}{4}\right)
\]
also gives the map
in Example~\ref{ex:smalltree}. This is because the first and third rows of $H'$ are collinear, causing the cancellation of linear factors in the Horn map. Following \cite{clarke2018moment}, a~Horn pair $(H,\lambda)$ 
is  \emph{minimal} if the matrix $H$ has no zero rows and no pair of collinear rows.

\begin{lemma}\label{minimal-reduction}
Let $(H', \lambda')$ be a Horn pair arising from the Horn pair $(H, \lambda)$ by replacing two collinear rows $r_k$ and $r_\ell$ in $H$ such that $r_\ell = \mu r_k$ with their sum $r_k + r_\ell$ and setting
\[
	\lambda'_j \,= \,\frac{\lambda_j \mu^{\mu\cdot h_{k j}}}{(1+\mu)^{(1+\mu)h_{k j}}} \quad \text{for all $\,j=0,\dotsc,n$.}
\]
Then the Horn maps $\varphi_{(H',\lambda')}$ and $\varphi_{(H,\lambda)}$ are equal.
\end{lemma}

\begin{proof}
Let $w_k$ and $w_\ell$ be the linear forms associated to the rows $r_k$ and $r_\ell$ respectively. 
Fix a column index $j$. We have $w_\ell = \mu w_k$ and
$h_{\ell j} = \mu h_{kj}$. The factors of the $j$-th coordinates of the Horn maps $\varphi_{(H,\lambda)}$ and $\varphi_{(H',\lambda')}$ that have changed after the operation are
$\lambda_j w_k^{b_{kj}} w_\ell^{b_{\ell j}} = \lambda_j\mu^{\mu\cdot h_{k j}} w_k^{(1+\mu)h_{kj}}$
for $(H,\lambda)$ and $\lambda'_j (w_k + w_\ell)^{(1+\mu)h_{k j}} = \lambda'_j (1+\mu)^{(1+\mu)h_{k j}} w_k^{(1+\mu) h_{k j}}$ for $(H',\lambda')$. 
Equating these two gives the desired formula.
\end{proof}

Every Horn map is represented by a unique minimal Horn pair.
This follows by unique factorization, see also \cite[Proposition~6.11]{clarke2018moment}.
To make a Horn pair minimal, 
while retaining the Horn map, we can use Lemma~\ref{minimal-reduction} repeatedly, deleting zero rows as they appear.

\begin{example} \label{ex:illustrate}
We illustrate the equivalence of (1) and (2) in Theorem \ref{thm:main}
for the model described in \cite[Example 3.11]{huh2014likelihood}. Here $n=3$ and $m=4$ and
the Horn matrix equals
\begin{equation}
\label{eq:matrix44} H \quad = \quad \begin{small} \begin{pmatrix}
-1 & -1 & -2 & -2 \,\, \\
 \phantom{-} 1 &   \phantom{-} 0 & \phantom{-}  3  &  \phantom{-}  2 \,\, \\
 \phantom{-}  1 &\phantom{-}  3 &  \phantom{-} 0 &  \phantom{-}  2 \,\, \\
 -1 & -2 & -1 & -2 \, \,\end{pmatrix}.\end{small}
\end{equation}
This Horn matrix is friendly because the following vector satisfies the identity (\ref{eq:friendly}):
\begin{equation}\label{eq:lambda14} \lambda \,=\, (\lambda_0,\lambda_1,\lambda_2,\lambda_3)\,=\,
\begin{small}
\biggl( \frac{2}{3} \,, \,-\frac{4}{27} \,,\, -\frac{4}{27}\,,\,\frac{1}{27} \biggr). \end{small}
\end{equation}
The pair $(H,\lambda)$ is a Horn pair, with associated Horn map
\begin{equation}
\label{eq:hornmap}
 \varphi_{(H,\lambda)} : \,\RR^{4}_{>0} \,\to \,\RR^{4}_{>0} \,,\,\,
\begin{pmatrix} u_0 \\
u_1 \\ u_2 \\ u_3 \end{pmatrix} \mapsto \begin{pmatrix} 
\frac{2 (u_0 + 3u_2 + 2u_3)(u_0+3u_1+2u_3)}{3(u_0+u_1+2u_2 + 2 u_3)(u_0+2u_1+u_2+2u_3)} \smallskip \\
\frac{4(u_0+3u_1+2u_3)^3}{27(u_0+u_1+2u_2 + 2 u_3)(u_0+2u_1+u_2+2u_3)^2} 
\smallskip \\\,
\frac{4(u_0 + 3u_2 + 2u_3)^3}{27(u_0+u_1+2u_2 + 2 u_3)^2(u_0+2u_1+u_2+2u_3)}
\smallskip \\
\frac{(u_0 + 3u_2 + 2u_3)^2(u_0+3u_1+2u_3)^2}{27(u_0+u_1+2u_2 + 2 u_3 )^2(u_0+2u_1+u_2+2u_3)^2}\,
\end{pmatrix}.
\end{equation}
Indeed, this rational function takes positive vectors to positive vectors.
The image of the map $\varphi_{(H,\lambda)}  $ is a subset $\mathcal{M}$ of the tetrahedron
$\Delta_3 = \{p \in \RR^4_{>0}:p_0+p_1+p_2 + p_3 = 1 \}$. We regard the subset
$\mathcal{M}$ as a discrete statistical model on the state space $\{0,1,2,3\}$.
The model $\mathcal{M}$ is the curve of degree $4$ inside $\Delta_3$ defined by the two quadratic equations
$$ 9p_1p_2 - 8 p_0p_3 \,=\,p_0^2 - 12p_3 \,=\,0. $$
As in \cite[Example 3.11]{huh2014likelihood},
one verifies that $\mathcal{M}$ has rational MLE, namely
$\,\Phi = \varphi_{(H,\lambda)}$.
\end{example}

We next define all the terms  used in part (3) of Theorem \ref{thm:main}.
Fix a matrix $A = (a_{ij}) \in \mathbb Z^{r\times m}$ of rank $r$ that has the vector
$(1,\ldots,1)$ in its row span. The connection to part (2) of Theorem \ref{thm:main} will be that the rows
of $A$ span the left kernel of $H$.
We identify the columns of $A$ with Laurent monomials in $r$ unknowns 
$t_1,\ldots,t_r$. The associated monomial map~is
\begin{equation}
\label{eq:monomap}
\gamma_A \,\,:\, (\RR^*)^{r} \to \mathbb \RR \mathbb{P}^{m-1}\,,\,\,\,
(t_1,\ldots,t_r) \,\mapsto \,
\biggl( \,\prod_{i=1}^r t_i^{a_{i1}}:
\,\prod_{i=1}^r t_i^{a_{i2}}:\,\, \cdots \,\, :
\,\prod_{i=1}^r t_i^{a_{im}} \biggr).
\end{equation}
Here $\RR^* = \RR \backslash \{0\}$ and $\RR \mathbb{P}^{m-1}$
denotes the real projective space of dimension $m-1$.
Let $Y_A$ be the closure of the image of $\gamma_A$. This is 
the projective toric variety given by~$A$.

Every point $x = (x_1:\cdots:x_m)$ in the dual projective space $(\RR \mathbb P^{m-1})^\vee$
corresponds to a hyperplane $H_x$ in $\RR \PP^{m-1}$. 
The \emph{dual variety} $Y_A^*$ to the toric variety $Y_A$ is the closure of 
\[
\bigl\{\,x\in (\RR \mathbb P^{m-1})^{\vee} \,\mid\, \gamma_A^{-1}(H_x\cap Y_A)
\, \text{\rm{ is singular}} \,\bigr\}.
\]
Here, the term {\em singular} means that the variety $\gamma_A^{-1}(H_x\cap Y_A)$ has a singular point in $\mathbb (\mathbb R^*)^r$. 
A general point $x$ in 
$Y_A^*$ hence corresponds to a hyperplane $H_x$ that is 
tangent to the toric variety $Y_A$ at a point $\gamma_A(t)$ with nonzero coordinates.
We identify sign vectors $\sigma \in \{-1,+1\}^m$  with orthants in $\RR^m$.
These map in a $2$-to-$1$ manner to
orthants in  $\RR \mathbb{P}^{m-1}$. If we intersect them with $Y_A^*$, 
then we get the {\em orthants} of the dual toric variety:
\begin{equation}
\label{eq:orthantdef} Y_{A,\sigma}^* \,\, = \,\, \bigl\{ \,x \in Y_A^*\, :\,  \sigma_i \cdot x_i > 0 
\,\,\hbox{for} \,\, i=1,2,\ldots,m \,\bigr\} \quad \subset \,\,\,\RR \PP^{m-1}.
\end{equation}
One of these is the distinguished orthant in Theorem~\ref{thm:main}, part (3).

\begin{example} \label{ex:fourtwo}
Fix $m=4$ and $r=2$. The following matrix has
$(1,1,1,1)$ in its row span:
\begin{equation}
\label{eq:matrix24}
 A \,=\, \begin{pmatrix} 3 & 2 & 1 & 0 \\
0 & 1 & 2 & 3 \end{pmatrix}.
\end{equation}
As in \cite[Example 3.9]{huh2014likelihood}, the toric variety of $A$
is the {\em twisted cubic curve} in  $3$-space:
$$ Y_A \,=\,\overline{\bigl\{ (t_1^3 : t_1^2t_2: t_1 t_2^2 : t_2^3) \in \RR \mathbb{P}^3
 \,: \, t_1,t_2 \in {\RR}^* \bigr\}}.$$
The dual toric variety $Y_A^*$ is a surface in
$( \RR \mathbb{P}^3)^\vee$. Its points $x$ represent planes
in $\RR \mathbb{P}^3$ that are tangent to
the curve $Y_A$. Such a tangent plane corresponds to a cubic
$\,x_1 t^3+ x_2 t^2 +x_3 t + x_4 \,$ with a double root.
Just as we recognize quadrics with a double root by the vanishing of the quadratic discriminant, a cubic with coefficients $(x_1,x_2,x_3,x_4)$ has a double root if and only if the
following discriminant vanishes:
\begin{equation}
\label{eq:disc24}
\Delta_A  \,\, = \,\,
\underline{27 x_1^2 x_4^2} - 18 x_1 x_2 x_3 x_4 + 4 x_1 x_3^3 + 4 x_2^3 x_4 - x_2^2 x_3^2.
\end{equation}
Hence, $Y_A^*$ is the surface of degree $4$ in 
$(\RR \mathbb P^{3})^\vee$ defined by $\Delta_A$.
All eight orthants $Y_{A,\sigma}^*$ are non-empty.
The coefficient vectors of the following eight cubics lie on different orthants:
$$  \begin{small}  \begin{matrix} (t+1)^2(t+3),\,
     (t+5)^2(t-1),\,
     (t-1)^2(t+3),\,
     (t+5)^2(t-8), \\
     (t-3)^2(t+1),\,
     (t-1)^2(t-3),\,
  \underline{   (t-2)^2(t+3)},\,
     (t+1)^2(t-3) .\end{matrix} \end{small}
$$
For instance,
the underlined cubic corresponds to the point  $\,x=(1,-1,-8,12 )$ in 
the orthant $Y_{A,\sigma}^*$ associated with the sign vector $\sigma =(+1,-1,-1,+1)$.
\end{example}

Let $\Delta$ be a homogeneous polynomial in $m$ variables with $n+2$ monomials
and $\mathbf m$ one of these monomials.
There is a one-to-one correspondence between such pairs $(\Delta, \mathbf m)$ and pairs
$(H,\lambda)$ where $H$ is a Horn matrix of size $m \times (n+1)$ and $\lambda$ is a coefficient vector.
Namely, for $k=0,\dotsc,n$ write $h_k^+$ resp.\ $h_k^-$ for the positive  resp.~negative part of the column vector $h_k$, so~that $h_k = h_k^+ - h_k^-$. In addition, let $\mathrm{max}_k (h_k^-)$ be the entrywise maximum of the $h^-_k$.  
We pass from pairs $(H,\lambda)$ to pairs $(\Delta,\mathbf m)$ as follows:
\begin{equation}
\label{eq:mDelta}
\mathbf{m}\, =\, x^{\mathrm{max}_k (h_k^-)}
\quad  \text{ and } \quad \Delta \,= \,\mathbf{m}\cdot 
\biggl(1-\sum_{k=0}^n \lambda_k x^{h_k} \biggr).
\end{equation}
For the converse, from pairs $(\Delta,\mathbf m)$  to pairs $(H,\lambda)$,
 we divide $\Delta$ by~$\mathbf m$ and use the same equations to determine the pair $(H,\lambda)$. Note that the polynomial $\Delta$ being homogeneous and the matrix $H$ being a Horn matrix are equivalent conditions using the equations (\ref{eq:mDelta}).
Given a pair $(\Delta,\mathbf m)$ with associated pair $(H,\lambda)$, 
we define the monomial~map
\[
\phi_{(\Delta,{\bf m})} \, : \,  (\RR^*)^m \rightarrow \RR^{n+1} , \,
	\,x \, \mapsto \,	\bigl(
		\lambda_0  x^{h_0}  , \,\lambda_1 x^{h_1}, \,\ldots ,\,
	 \lambda_n x^{h_n} \bigr).
\]	

We now present the definition that is needed for part (3) of Theorem~\ref{thm:main}.	 
\begin{definition} \label{def:dq}
A {\em discriminantal triple} $(A,\Delta,{\bf m})$ consists of
\begin{enumerate}
\item an $r \times m$ integer matrix $A$ of rank $r$ having $(1,1,\ldots,1)$ in its row span,
\item an $A$-homogeneous polynomial $\Delta$  that vanishes
on the dual toric variety $Y_A^*$,
\item a distinguished term ${\bf m}$ among those that occur in 
the polynomial $\Delta$,
\end{enumerate}
such that the pair $(H,\lambda)$ associated to $(\Delta,\mathbf m)$ is a Horn pair. Here,
the polynomial
$\Delta$ being {\em $A$-homogeneous} means that 
 $Av = Aw$ for any two exponent vectors $v$ and $w$ of $\Delta$. \end{definition}

All definitions are now complete.
We illustrate Definition~\ref{def:dq} for our running example:

\begin{example}
Let $A$ be the $2 \times 4$ matrix in (\ref{eq:matrix24}),
$\Delta = \Delta_A$ its discriminant in (\ref{eq:disc24}), and
${\bf m} = 27x_1^2x_4^2$  the special term.
Then $(A,\Delta,{\bf m})$ is a discriminantal triple with
associated sign vector $\sigma=(+1,-1,-1,+1)$.
The orthant $Y_{A,\sigma}^*$, highlighted
in Example \ref{ex:fourtwo}, is a semialgebraic surface
in $Y_A^* \subset \RR \PP^3$. This surface is mapped into 
the tetrahedron $\Delta_3$~by
\begin{equation}
\label{eq:othermap}  \phi_{(\Delta,{\bf m})} \,: \, (x_1,x_2,x_3,x_4)\,\mapsto \,
\biggl( \frac{2}{3} \frac{x_2x_3}{x_1 x_4},
-\frac{4}{27} \frac{x_3^3}{x_1 x_4^2},
-\frac{4}{27} \frac{x_2^3}{x_1^2 x_4},
\frac{1}{27} \frac{x_2^2 x_3^2}{x_1^2 x_4^2}
\biggr).
\end{equation}
The image of this map is a curve in $\Delta_3$, namely
the model $\mathcal{M}$ in Example~\ref{ex:illustrate}.
We verify (\ref{eq:threemaps}) by comparing 
(\ref{eq:hornmap}) with (\ref{eq:othermap}).
The former is obtained from the latter by setting $x=Hu$.
\end{example}

\section{Staged Trees}

We consider contingency tables $u = (u_{i_1 i_2 \cdots i_m})$ of format 
$r_1 \times r_2 \times \cdots \times r_m$. Following \cite{DSS, lauritzen}, 
these represent joint distributions of discrete statistical models with
$n+1 =r_1r_2 \cdots r_m$ states. Namely, the contingency table $u$ represents the probability distribution $p \coloneqq u/|u|.$
For any subset $C \subset \{1,\ldots,m\}$, one considers 
the marginal table $u_C$ that is obtained by summing out all indices not in $C$.
The entries of the marginal table $u_C$ are sums
of entries in $u$. To obtain the entry $u_{I,C}$ of $u_C$ for any  state $I = (i_1,i_2,\ldots,i_m),$ 
we fix the indices of the states in $C$ and sum over the indices not in $C$.
For example, if $m=4$, $C=\{1,3\}$,
$I=(i,j,k,l)$, then $u_C$ is the $r_1 \times r_3$ matrix with entries
$$ u_{I,C} \,\, = \,\, 
u_{i + k + } \,\, =\,\,\,
\sum_{j=1}^{r_2} \sum_{l=1}^{r_4} u_{ i j k l} .$$
Such linear forms are the basic building blocks for
 familiar models with rational MLE.

Consider an undirected graph $G$ with vertex set $\{1,\ldots,m\}$
which is assumed to be {\em chordal}.
The associated {\em decomposable graphical model} $\mathcal{M}_G$
in $\Delta_n$ has the rational MLE 
\begin{equation}
\label{eq:mle_dg} \hat p_I \,\, = \,\, \frac{\prod_{C} u_{I,C} }{\prod_{S} u_{I,S}}, 
\end{equation}
where the product in the numerator is over all maximal cliques $C$ of $G$,
and the product in the denominator is over all separators $S$ in
a junction tree for $G$. See \cite[\S 4.4.1]{lauritzen}.
We shall regard $G$ as a directed graph, with edge
directions given by a
 perfect elimination ordering on the vertex set $\{1,\ldots,m\}$.
This turns $\mathcal{M}_G$ into a Bayesian network.
More generally, a {\em Bayesian network} $\mathcal{M}_G$ 
 is given by a directed acyclic graph $G$. We write
 ${\rm pa}(j)$ for the set of parents of the node $j$. The model $\mathcal{M}_G$
in $ \Delta_n$
has the rational MLE
 \begin{equation}
\label{eq:mle_bn} \hat p_I \,\, = \,\,
\prod_{j=1}^m \frac{u_{I,{\rm pa}(j) \cup \{j\}}}{ u_{I,{\rm pa}(j)}}.
\end{equation}
If $G$ comes from an undirected chordal graph then 
 (\ref{eq:mle_dg}) arises from
(\ref{eq:mle_bn}) by cancellations.

\begin{example}[$m=4$]\label{expl:u-plus} We revisit two examples
from on page 36 in \cite[\S 2.1]{DSS}.
The {\em star graph} $G = [14][24][34]$ is chordal. The MLE for $\mathcal{M}_G$ is the map
$\Phi$ with coordinates
$$ \hat p_{ijkl} \,\, =\,\,\, \frac{ u_{i++l} \cdot u_{+j+l} \cdot u_{++kl}}
{ u_{++++} \cdot u_{+++l}^2} \,\, =\,\,\,
\frac{u_{i+++}}{u_{++++}} \cdot
\frac{u_{+j+l}}{u_{+++l}} \cdot
\frac{u_{++kl}}{u_{+++l}} \cdot
  \frac{u_{i ++l}}{u_{i+++}}.
$$
The left expression is (\ref{eq:mle_dg}).
The right  is (\ref{eq:mle_bn})  for the directed graph
$1 \rightarrow 4$, $4 \rightarrow 2$, $4 \rightarrow 3$.

The {\em chain graph} $G = [12][23][34]$ is chordal. Its MLE is the map
$\Phi$ with coordinates
$$
\hat p_{ijkl} \,\, =\,\,\, \frac{ u_{ij++} \cdot u_{+jk+} \cdot u_{++kl}}
{ u_{+j++} \cdot u_{++k+} \cdot u_{++++}} \,\, =\,\,\, \varphi_{(H,\lambda)}(u)_{ijkl}.
$$
This is the Horn map
given by the matrix $H$ in Figure~\ref{fig:tH} and $\lambda = (1,\dotsc,1).$
\end{example}

The formulas (\ref{eq:mle_dg})  and (\ref{eq:mle_bn}) are familiar
to statisticians. Theorem~\ref{thm:main} places them into a larger context.
 However, some readers  may find
our approach too algebraic and too general. Our aim in this section is to lay out a
useful middle ground:  staged~tree models.

Staged trees were introduced by Smith and Anderson \cite{andersonSmith} as a generalization
of discrete Bayesian networks. They furnish an intuitive representation of
 many situations that the above graphs $G$ cannot capture. In spite of their wide scope,
staged tree models are appealing because of their intuitive formalism for
encoding events. For an introduction see
the textbook~\cite{CGS}.
In what follows we  study parts (1) and (2) in Theorem~\ref{thm:main} for staged~trees.

 To define a {\em staged tree model}, we consider a directed rooted tree $\mathcal T$ 
 with at least two edges emanating from each non-leaf vertex, a label set $S = \{s_i\mid i\in I\}$, and a labeling $\theta\colon \operatorname E(\mathcal T) \to S$ of the edges of the tree. 
Each vertex of $\mathcal T$ has a corresponding \emph{floret}, which is the multiset of edge labels emanating from it. The labeled tree $\mathcal T$ is a \emph{staged tree} if any two florets are either equal or disjoint. Two vertices in $\T$ are in the same stage if their corresponding florets are the same. From now on, $F$ denotes
the set of florets of $\T$.

\begin{definition} \label{def:stm}
Let $J$ be the set of root-to-leaf paths in the tree $\T$. We set $|J| = n+1$. For $i\in I$ and $j\in J$, let $\mu_{ij}$ 
denote the number of times edge label $s_i$ appears in the $j$-th root-to-leaf path.
The \emph{staged tree model} $\mathcal M_\T $ is the image of the parametrization
$$
	\phi_{\T} : \Theta\to \Delta_n \, , \,\,
   (s_i)_{i\in I} \mapsto (p_j)_{j\in J},
$$
where the parameter space is $\,\Theta:= \bigl\{(s_i)_{i \in I} \in (0,1)^{|I|} : \sum_{s_i\in f} s_i = 1 \text{ for all florets $f\in F$}\bigr\}$,  and $p_j = \prod_{i\in I} s_i^{\mu_{ij}}$ is the product of the edge parameters on the $j$-th root-to-leaf path.
\end{definition}
In the model  $\mathcal M_\T $, the  tree $\T$ represents 
possible sequences of events. The parameter $s_i$ associated to an edge $vv'$ is the transition
probability from $v$ to $v'$. All parameter labels in a floret sum to $1$. The fact that distinct 
nodes in $\T$ can have the same floret
of parameter labels enables staged tree models  to encode conditional independence
statements \cite{andersonSmith}. This allows us to represent any discrete
Bayesian network or decomposable model as a staged tree model. 
Our first staged tree was seen in Example \ref{ex:smalltree}.
Here is another specimen.

\begin{example}[$n=15$] \label{ex:stree}
Consider the decomposable model for binary variables
 given by the $4$-chain $G=[12][23][34]$ as in Example~\ref{expl:u-plus}.
 Figure~\ref{fig:tH} shows a realization of $\mathcal{M}_G$
 as a staged tree model $\mathcal M_\T $.
The leaves of $\T$ represent the outcome space  $\{0,1\}^4$.
Nodes with the same color have the same associated floret. 
The blank nodes all have different florets. The seven florets of $\T$ are 

\begin{small}
$$
f_1 {=}\{s_0,s_1\},   f_2 {=}\{s_2,s_3\},   f_3 {=}\{s_4,s_5\},    f_4 {=} \{s_6,s_7\},
f_5 {=}\{s_8,s_9\},   f_6 {=}\{s_{10},s_{11}\},   f_7 {=}\{s_{12},s_{13}\}.
$$
\end{small}
\end{example}

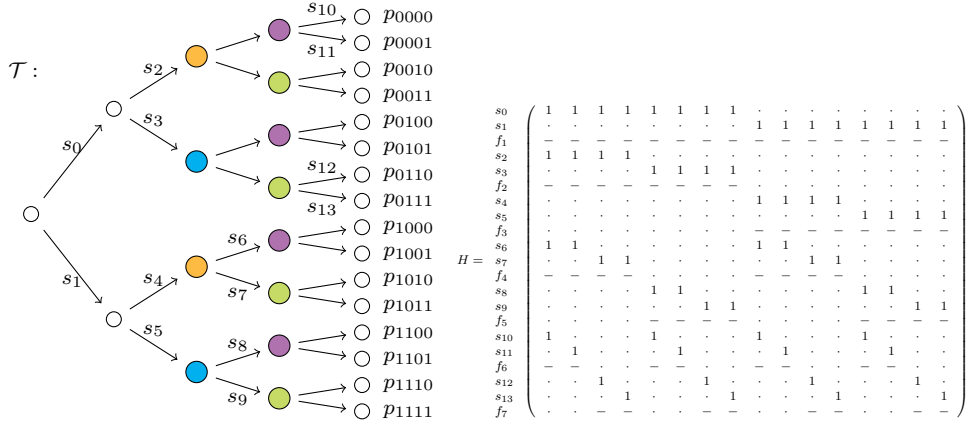
\begin{figure}[h]
\begin{center}
\begin{tikzpicture}
\renewcommand{\xx}{1.1}
\renewcommand{\yy}{0.35}
\node at (1,13*\yy) {$\T:$};

\node (r) at (1*\xx,7.5*\yy) {\bt};

\node (d1) at (2*\xx,11.5*\yy) {\bt};
\node (d2) at (2*\xx,3.5*\yy) {\bt};

\node (c1) at (3*\xx,13.5*\yy) {\stage{Dandelion}{1}};
\node (c2) at (3*\xx,9.5*\yy) {\stage{ProcessBlue}{1}};
\node (c3) at (3*\xx,5.5*\yy) {\stage{Dandelion}{1}};
\node (c4) at (3*\xx,1.5*\yy) {\stage{ProcessBlue}{1}};

\node (b1) at (4*\xx,14.5*\yy) {\stage{Orchid}{1}};
\node (b2) at (4*\xx,12.5*\yy) {\stage{SpringGreen}{1}};
\node (b3) at (4*\xx,10.5*\yy) {\stage{Orchid}{1}};
\node (b4) at (4*\xx,8.5*\yy) {\stage{SpringGreen}{1}};
\node (b5) at (4*\xx,6.5*\yy) {\stage{Orchid}{1}};
\node (b6) at (4*\xx,4.5*\yy) {\stage{SpringGreen}{1}};
\node (b7) at (4*\xx,2.5*\yy) {\stage{Orchid}{1}};
\node (b8) at (4*\xx,0.5*\yy) {\stage{SpringGreen}{1}};

\node (a1) at (5*\xx,15*\yy) {\bt};
\node (a2) at (5*\xx,14*\yy) {\bt};
\node (a3) at (5*\xx,13*\yy) {\bt};
\node (a4) at (5*\xx,12*\yy) {\bt};
\node (a5) at (5*\xx,11*\yy) {\bt};
\node (a6) at (5*\xx,10*\yy) {\bt};
\node (a7) at (5*\xx,9*\yy) {\bt};
\node (a8) at (5*\xx,8*\yy) {\bt};
\node (a9) at (5*\xx,7*\yy) {\bt};
\node (a10) at (5*\xx,6*\yy) {\bt};
\node (a11) at (5*\xx,5*\yy) {\bt};
\node (a12) at (5*\xx,4*\yy) {\bt};
\node (a13) at (5*\xx,3*\yy) {\bt};
\node (a14) at (5*\xx,2*\yy) {\bt};
\node (a15) at (5*\xx,1*\yy) {\bt};
\node (a16) at (5*\xx,0*\yy) {\bt};

\draw[->] (r) -- node [above] {\ls{$s_0$}} (d1);
\draw[->] (r) -- node [below] {\ls{$s_1$}} (d2);

\draw[->] (d1) -- node [above] {\ls{$s_2$}} (c1);
\draw[->] (d1) -- node [above] {\ls{$s_3$}} (c2);
\draw[->] (d2) -- node [above] {\ls{$s_4$}} (c3);
\draw[->] (d2) -- node [above] {\ls{$s_5$}} (c4);

\draw[->] (c1) -- node [above] {} (b1);
\draw[->] (c1) -- node [below] {} (b2);
\draw[->] (c2) -- node [above] {} (b3);
\draw[->] (c2) -- node [below] {} (b4);
\draw[->] (c3) -- node [above] {\ls{$s_6$}} (b5);
\draw[->] (c3) -- node [below] {\ls{$s_7$}} (b6);
\draw[->] (c4) -- node [above] {\ls{$s_8$}} (b7);
\draw[->] (c4) -- node [below] {\ls{$s_9$}} (b8);

\draw[->] (b1) -- node [above] {\ls{$s_{10}$}} (a1);
\draw[->] (b1) -- node [below] {\ls{$s_{11}$}} (a2);
\draw[->] (b2) -- node [above] {} (a3);
\draw[->] (b2) -- node [below] {} (a4);
\draw[->] (b3) -- node [above] {} (a5);
\draw[->] (b3) -- node [above] {} (a6);
\draw[->] (b4) -- node [above] {\ls{$s_{12}$}} (a7);
\draw[->] (b4) -- node [below] {\ls{$s_{13}$}} (a8);
\draw[->] (b5) -- node [above] {} (a9);
\draw[->] (b5) -- node [above] {} (a10);
\draw[->] (b6) -- node [above] {} (a11);
\draw[->] (b6) -- node [above] {} (a12);
\draw[->] (b7) -- node [above] {} (a13);
\draw[->] (b7) -- node [above] {} (a14);
\draw[->] (b8) -- node [above] {} (a15);
\draw[->] (b8) -- node [above] {} (a16);


\node [right, xshift=5] at (a1) {$p_{0000}$};
\node [right, xshift=5] at (a2) {$p_{0001}$};
\node [right, xshift=5] at (a3) {$p_{0010}$};
\node [right, xshift=5] at (a4) {$p_{0011}$};
\node [right, xshift=5] at (a5) {$p_{0100}$};
\node [right, xshift=5] at (a6) {$p_{0101}$};
\node [right, xshift=5] at (a7) {$p_{0110}$};
\node [right, xshift=5] at (a8) {$p_{0111}$};
\node [right, xshift=5] at (a9) {$p_{1000}$};
\node [right, xshift=5] at (a10){$p_{1001}$};
\node [right, xshift=5] at (a11){$p_{1010}$};
\node [right, xshift=5] at (a12){$p_{1011}$};
\node [right, xshift=5] at (a13){$p_{1100}$};
\node [right, xshift=5] at (a14){$p_{1101}$};
\node [right, xshift=5] at (a15){$p_{1110}$};
\node [right, xshift=5] at (a16){$p_{1111}$};

\end{tikzpicture}
\begin{tikzpicture}
\node (m) at (1*\xx,11*\yy) {
\scalebox{0.6}{$H={\begin{array}{l}
      s_0\\s_1\\f_1\\s_2\\s_3\\f_2\\s_4\\s_5\\f_3
\\s_6\\s_7\\f_4\\s_8\\s_9\\f_5\\s_{10}\\s_{11}\\f_6\\s_{12}\\s_{13}\\f_7
      \end{array} \left({
      \begin{array}{cccccccccccccccc}
      1&1&1&1&1&1&1&1&\cdot&\cdot&\cdot&\cdot&\cdot&\cdot&\cdot&\cdot\\
      \cdot&\cdot&\cdot&\cdot&\cdot&\cdot&\cdot&\cdot&1&1&1&1&1&1&1&1\\
      {-}&{-}&{-}&{-}&{-}&{-}&{-}&{-}&{-}&{-}&{-}&{-}&{-}&{-}&{-}&{-}\\
      1&1&1&1&\cdot&\cdot&\cdot&\cdot&\cdot&\cdot&\cdot&\cdot&\cdot&\cdot&\cdot&\cdot\\
      \cdot&\cdot&\cdot&\cdot&1&1&1&1&\cdot&\cdot&\cdot&\cdot&\cdot&\cdot&\cdot&\cdot\\
      {-}&{-}&{-}&{-}&{-}&{-}&{-}&{-}&\cdot&\cdot&\cdot&\cdot&\cdot&\cdot&\cdot&\cdot\\
      \cdot&\cdot&\cdot&\cdot&\cdot&\cdot&\cdot&\cdot&1&1&1&1&\cdot&\cdot&\cdot&\cdot\\
      \cdot&\cdot&\cdot&\cdot&\cdot&\cdot&\cdot&\cdot&\cdot&\cdot&\cdot&\cdot&1&1&1&1\\
      \cdot&\cdot&\cdot&\cdot&\cdot&\cdot&\cdot&\cdot&{-}&{-}&{-}&{-}&{-}&{-}&{-}&{-}\\
      1&1&\cdot&\cdot&\cdot&\cdot&\cdot&\cdot&1&1&\cdot&\cdot&\cdot&\cdot&\cdot&\cdot\\
      \cdot&\cdot&1&1&\cdot&\cdot&\cdot&\cdot&\cdot&\cdot&1&1&\cdot&\cdot&\cdot&\cdot\\
      {-}&{-}&{-}&{-}&\cdot&\cdot&\cdot&\cdot&{-}&{-}&{-}&{-}&\cdot&\cdot&\cdot&\cdot\\
      \cdot&\cdot&\cdot&\cdot&1&1&\cdot&\cdot&\cdot&\cdot&\cdot&\cdot&1&1&\cdot&\cdot\\
      \cdot&\cdot&\cdot&\cdot&\cdot&\cdot&1&1&\cdot&\cdot&\cdot&\cdot&\cdot&\cdot&1&1\\
      \cdot&\cdot&\cdot&\cdot&{-}&{-}&{-}&{-}&\cdot&\cdot&\cdot&\cdot&{-}&{-}&{-}&{-}\\
      1&\cdot&\cdot&\cdot&1&\cdot&\cdot&\cdot&1&\cdot&\cdot&\cdot&1&\cdot&\cdot&\cdot\\
      \cdot&1&\cdot&\cdot&\cdot&1&\cdot&\cdot&\cdot&1&\cdot&\cdot&\cdot&1&\cdot&\cdot\\
      {-}&{-}&\cdot&\cdot&{-}&{-}&\cdot&\cdot&{-}&{-}&\cdot&\cdot&{-}&{-}&\cdot&\cdot\\
      \cdot&\cdot&1&\cdot&\cdot&\cdot&1&\cdot&\cdot&\cdot&1&\cdot&\cdot&\cdot&1&\cdot\\
      \cdot&\cdot&\cdot&1&\cdot&\cdot&\cdot&1&\cdot&\cdot&\cdot&1&\cdot&\cdot&\cdot&1\\
      \cdot&\cdot&{-}&{-}&\cdot&\cdot&{-}&{-}&\cdot&\cdot&{-}&{-}&\cdot&\cdot&{-}&{-}\\
      \end{array}}\right)}$}};
\end{tikzpicture}
\end{center}\vspace {-0.1in}
\caption{
A staged tree $\T$ and its Horn matrix $H$ 
from Proposition~\ref{prop:mle-staged-trees}. Entries  $-$ indicate  $-1$.}\label{fig:tH}
\end{figure}

Next we show that staged tree models have rational MLE, so they 
satisfy part (1) of Theorem \ref{thm:main}.
Our formula for $\Phi$ uses the notation for
$I,J$ and $\mu_{ij}$ introduced in Definition~\ref{def:stm}.
This formula is known in the literature on
chain event graphs (see e.g.~\cite{Silander.Leong.2013}).

\begin{proposition} \label{prop:mle-staged-trees}
Let $\mathcal{M}_{\mathcal{T}}$ be a staged tree model, and let $u = (u_j)_{j\in J}$ be a
vector of counts. For $i\in I$, let $f$ be the floret containing 
the label $s_i$, and define the  estimates
\[
	\hat s_i \,\, \coloneqq \,\, \frac{\sum_j \mu_{ij}u_j}{\sum_{s_{\ell }\in f}\sum_{j} \mu_{\ell j} u_j}
\quad {\rm and} \quad
	\hat p_j \,\,\coloneqq \,\, \prod_{i \in I} (\hat s_i)^{\mu_{ij}}.
\]
The rational function $\,\Phi\,$ that sends $\,(u_j)_{j\in J}\,$ 
to $\,(\hat p_j)_{j\in J}\,$ is the MLE of the model $\mathcal M_\T$.
\end{proposition}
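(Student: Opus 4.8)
The plan is to recognize that the MLE of a staged tree model decomposes as a product over florets, so that the global optimization problem splits into independent optimization problems, one per floret, each of which is solved by the classical MLE for a (sub-)multinomial distribution.

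First I would write the log-likelihood function explicitly in the parameters $(s_i)_{i\in I}$. Using $p_j = \prod_{i\in I} s_i^{\mu_{ij}}$, we have
\[
\ell(s) \,=\, \sum_{j\in J} u_j \log p_j \,=\, \sum_{j\in J} u_j \sum_{i\in I} \mu_{ij}\log s_i \,=\, \sum_{i\in I}\Bigl(\sum_{j\in J}\mu_{ij}u_j\Bigr)\log s_i.
\]
Setting $U_i \coloneqq \sum_{j\in J}\mu_{ij}u_j$, this is $\ell(s) = \sum_{i\in I} U_i \log s_i$, and the constraints are exactly $\sum_{s_i\in f} s_i = 1$ for each floret $f\in F$, with the $s_i$ ranging over disjoint blocks indexed by the florets. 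Hence $\ell$ separates as a sum $\sum_{f\in F}\ell_f$ where $\ell_f(s) = \sum_{s_i\in f} U_i\log s_i$ depends only on the variables in floret $f$, and these variable blocks are disjoint because any two florets are equal or disjoint (and when they are "equal" as label sets the shared labels are genuinely the same variables, so no double counting occurs). Each $\ell_f$ subject to $\sum_{s_i\in f} s_i = 1$ is the log-likelihood of a multinomial, whose unique maximizer is the normalized count $\hat s_i = U_i/\sum_{s_\ell\in f} U_\ell$; this is a standard Lagrange-multiplier computation, and strict concavity of $\log$ gives uniqueness. This yields precisely the formula for $\hat s_i$ in the statement, and then $\hat p_j = \prod_{i\in I}(\hat s_i)^{\mu_{ij}}$ by definition of the parametrization.

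Next I would check that the resulting point $\hat p = (\hat p_j)_{j\in J}$ is the value of the MLE $\Phi$ as defined in the introduction, i.e. the point of $\mathcal{M}_\T$ maximizing $\sum_j u_j\log p_j$ over $p\in\mathcal{M}_\T$. Since $\phi_\T$ surjects from $\Theta$ onto $\mathcal{M}_\T$, maximizing over $p\in\mathcal{M}_\T$ is the same as maximizing $\ell(s)$ over $s\in\Theta$; the argument above shows this maximum is attained at $\hat s = (\hat s_i)$, so $\Phi(u) = \phi_\T(\hat s) = \hat p$. One should also note that $\hat s\in\Theta$, i.e. each $\hat s_i\in(0,1)$: this holds whenever all the floret sums $\sum_{s_\ell\in f} U_\ell$ are strictly positive, which is automatic for $u$ in the interior (all $u_j>0$) since every edge label appears on at least one root-to-leaf path; for boundary data one extends by the convention $\Phi(u) = \Phi(u/|u|)$ together with a continuity or limiting argument, or simply restricts the claim to positive $u$ as elsewhere in the paper. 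Finally, $\Phi$ is manifestly a rational function of $u$: each $\hat s_i$ is a ratio of linear forms in $u$, and each $\hat p_j$ is a monomial in the $\hat s_i$, hence an alternating product of linear forms in $u$.

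I do not expect a serious obstacle here; the content is essentially the well-known factorization of the likelihood for staged trees (the formula is attributed in the text to the chain-event-graph literature). The one point requiring a little care is the separation-of-variables step: one must be sure that the floret blocks partition the variable set $\{s_i : i\in I\}$ rather than merely partitioning the edge set, so that the coefficient $U_i$ of $\log s_i$ collects all contributions to that single shared parameter and the per-floret subproblems are genuinely independent. This is exactly what the staged-tree axiom (any two florets equal or disjoint) guarantees. A secondary, purely bookkeeping, point is matching the normalization: one should observe that $\sum_j u_j$ and $|u|$ enter only through the convention $\Phi(u)=\Phi(u/|u|)$, and that $\hat p$ is homogeneous of degree zero in $u$, so the formula is consistent with that convention.
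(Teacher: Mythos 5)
Your argument is essentially the same as the paper's: factor the likelihood as a product of per-floret likelihoods over disjoint variable blocks, observe that each factor is the likelihood of a saturated multinomial model, and apply the classical multinomial MLE. The paper works with the likelihood $L$ rather than its logarithm and is more terse about the domain/positivity points you flag, but the decomposition and the key invocation of the multinomial MLE are identical.
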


\begin{proof}
We prove that the likelihood function $L(p,u)$ has a unique maximum at 
$p = (\hat p_j)_{j\in J}$.
For a floret $f\in F$,  we fix the  vector of parameters $s_f = (s_i)_{s_i\in f}$,
and we define the local
 likelihood function $L_f(s_f,u) = \prod_{s_i\in f} s_i^{\alpha_i}$, where
$\alpha_i =\sum_j \mu_{ij} u_j$.  We~have
\[	L(p,u) \, = \, \prod_j p_j^{u_j}  \, = \,\prod_j \prod_i s_i ^ {u_j\mu_{ij}}\, 
= \,\prod_i s_i^{\alpha_i}  \, = \, \prod_{f\in F} L_f(s_f,u). \]
	Since the $L_f$ depend on disjoint sets of unknowns, maximizing 
	$L$ is achieved by maximizing the factors $L_f$ separately.
	But $L_f$ is the likelihood function of the full model $\Delta_{|f|-1}$, 
given the data vector $(\alpha_i)_{s_i\in f}$. The MLE of that model is
$\hat s_i = \alpha_i / \sum_{s_\ell \in f} \alpha_\ell$, where $s_i \in f$.
We conclude that $\,\mathrm{argmax}_{s_f}\bigl(L_f(s_f,u)\bigr) = (\hat s_i)_{s_i\in f}\,$ 
and  $\,\mathrm{argmax}_p \bigl( L(p,u) \bigr) = (\hat p_j)_{j \in J}$.
\end{proof}

\begin{remark}\label{elementary}
Here is a method for evaluating the MLE  in Proposition~\ref{prop:mle-staged-trees}.
Let $[v]\subset J$ be the set of root-to-leaf paths through a node $v$
in the tree $\T$	and define $u_{[v]}=\sum_{j\in [v]}u_j$.
	The ratio $\frac{u_{[v']}}{u_{[v]}}$ is  the empirical transition probability from
	$v$ to $v'$ given arrival at $v$.
	To obtain $\hat s_i$ we first compute the quotients $\frac{u_{[v']}}{u_{[v]}}$ for all edges $vv'$ with parameter
	label $s_i$. We aggregate them by adding
	their numerators and denominators separately.
	 This gives
	$\, \hat s_i=	(\sum u_{[v']})/(\sum u_{[v]})$,
	where both sums range over all edges $vv'$ with parameter
	label $s_i$.
\end{remark}

 Proposition~\ref{prop:mle-staged-trees} yields an explicit
description of the Horn pair $(H,\lambda)$ associated to $\mathcal{M}_{\T}$.

\begin{corollary} \label{cor:horn-staged-trees}
Fix a staged tree model $\mathcal{M}_{\T}$ as above.
Let      $H$ be the $(|I|+|F|)\times |J|$ matrix whose rows are indexed
     by the set $I\sqcup F$ and entries are given by
      \begin{align*}
     &h_{ij}\,=\, \mu_{ij} \text{ for } i \in I, \text{ and } \\
     &h_{fj}\,= \, -\sum_{s_{\ell} \in f}\mu_{\ell j} \text{ for }f \in F.
     \end{align*}
     Define $\lambda\in \{-1,+1\}^{|J|}$ by 
     $\lambda_j= (-1)^{\sum_{f}h_{fj}}$. Then $(H,\lambda)$ is a Horn pair for 
     $\mathcal{M}_{\T}$.
\end{corollary}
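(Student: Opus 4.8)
The plan is to verify that the matrix $H$ and vector $\lambda$ described in the corollary indeed produce the Horn map computed in Proposition~\ref{prop:mle-staged-trees}, and then invoke Remark~\ref{rem:makeminimal} to conclude that the associated pair $(\tilde H,\tilde\lambda)$ is the Horn pair of $\mathcal{M}_\T$. First I would check that $H$ is a Horn matrix, i.e.\ that its columns sum to zero: for each root-to-leaf path $j$, the $I$-rows contribute $\sum_{i\in I}\mu_{ij}$ and the $F$-rows contribute $-\sum_{f\in F}\sum_{s_\ell\in f}\mu_{\ell j}$, and since the florets partition the label set $I$, these two sums cancel. Next I would compute the alternating product $(Hu)^{h_j}$ directly from the row structure: the $I$-row indexed by $s_i$ gives the linear form $\sum_k\mu_{ik}u_k$ raised to the power $\mu_{ij}$, while the $F$-row indexed by $f$ gives $\bigl(\sum_{s_\ell\in f}\sum_k\mu_{\ell k}u_k\bigr)$ raised to the power $-\sum_{s_\ell\in f}\mu_{\ell j}$. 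Comparing with the formula in Proposition~\ref{prop:mle-staged-trees}, one sees that $\hat p_j=\prod_{i\in I}(\hat s_i)^{\mu_{ij}}$ equals exactly $\pm(Hu)^{h_j}$, where the sign discrepancy arises solely from the $F$-rows: each denominator $\sum_{s_\ell\in f}\sum_k\mu_{\ell k}u_k$ is positive, but it has been raised to a negative power and we have not yet accounted for the sign of the whole expression. Setting $\lambda_j=(-1)^{\sum_f h_{fj}}=(-1)^{-\sum_f\sum_{s_\ell\in f}\mu_{\ell j}}$ restores the correct sign, so that $\lambda_j(Hu)^{h_j}=\hat p_j>0$ for all positive $u$.

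Having matched $\varphi$ with $\Phi$ on the positive orthant, I would then argue that $(H,\lambda)$ is a \emph{friendly} pair: the identity $\sum_j\lambda_j(Hu)^{h_j}=\sum_j\hat p_j=1$ holds because $\Phi$ maps into $\Delta_n$ (the $\hat p_j$ sum to one as products of florets summing to one along each path, which is just consistency of the MLE from Proposition~\ref{prop:mle-staged-trees}). Positivity of the map on positive vectors was just checked, so $(H,\lambda)$ satisfies the hypotheses of Remark~\ref{rem:makeminimal}; aggregating collinear rows and deleting zero rows yields the Horn pair $(\tilde H,\tilde\lambda)$, which defines the same rational function. Since that function is $\Phi$, the MLE of $\mathcal{M}_\T$, and since by Remark~\ref{rem:makeminimal} (or Remark~\ref{rem:discriminantminimal}) a Horn pair is uniquely determined up to row permutation by its Horn map, $(\tilde H,\tilde\lambda)$ is \emph{the} Horn pair of $\mathcal{M}_\T$.

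The one point requiring a little care — and the step I expect to be the main obstacle — is the sign bookkeeping in the definition of $\lambda$. One must confirm that the exponent $\sum_f h_{fj}=-\sum_{s_\ell}\mu_{\ell j}$ (summing over all labels, since florets partition $I$) has a well-defined parity that correctly compensates the signs hidden in the negative powers of the $F$-row linear forms, and that this is consistent with the requirement $\lambda\in\{-1,+1\}^{|J|}$ appearing in the statement. Concretely, one checks that $\prod_{i\in I}(\hat s_i)^{\mu_{ij}}$, written with a common denominator, carries a sign $(-1)^{\sum_f h_{fj}}$ relative to $(Hu)^{h_j}$; since every linear form $\sum_k\mu_{ik}u_k$ and every floret-sum is positive on the positive orthant, this sign is exactly $\lambda_j$, and the positivity condition in the definition of a Horn pair is met. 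The remaining verifications — that $\tilde H$ has no zero row and no two proportional rows, that the sign vector $\sign(Hu)$ is constant on $\mathbb{R}^{n+1}_{>0}$ (immediate, as all entries of $Hu$ are positive), and that condition~\eqref{eq:friendly} holds — then follow routinely, completing the identification of $(\tilde H,\tilde\lambda)$ as the Horn pair of $\mathcal{M}_\T$.
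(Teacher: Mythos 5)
Your approach is the one the paper intends: read the pair $(H,\lambda)$ directly off the explicit MLE formula in Proposition~\ref{prop:mle-staged-trees}, check friendliness and positivity, then pass to $(\tilde H,\tilde\lambda)$ via Remark~\ref{rem:makeminimal} and invoke uniqueness of the Horn pair given its Horn map. The argument is essentially correct, but there is one slip in the sign bookkeeping worth fixing. You write that the $F$-row indexed by $f$ contributes the factor $\bigl(\sum_{s_\ell\in f}\sum_k\mu_{\ell k}u_k\bigr)^{h_{fj}}$ and then attribute the sign discrepancy to ``raising a positive quantity to a negative power.'' A positive base raised to a negative integer power is still positive, so with the factor as you wrote it there would be \emph{no} sign to compensate. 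The actual source is that the $f$-th coordinate of $Hu$ is $-\sum_{s_\ell\in f}\sum_k\mu_{\ell k}u_k$, a \emph{negative} linear form, and raising that negative base to the integer power $h_{fj}$ yields a factor $(-1)^{h_{fj}}$. Multiplying over $f\in F$ gives $(Hu)^{h_j}=(-1)^{\sum_f h_{fj}}\,\hat p_j$, which the definition $\lambda_j=(-1)^{\sum_f h_{fj}}$ cancels exactly. You do state the correct final relation $\lambda_j(Hu)^{h_j}=\hat p_j>0$ in your last paragraph, so the slip does not propagate; the remaining points (friendliness via $\sum_j\hat p_j=1$, which holds on the Zariski-dense positive orthant and hence as a rational-function identity; reduction to $(\tilde H,\tilde\lambda)$; and uniqueness) are handled correctly.
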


Given a staged tree $\mathcal T$, we call the matrix $H$ 
in Corollary~\ref{cor:horn-staged-trees} the \emph{Horn matrix} of~$\mathcal T$.

\begin{remark}
In Corollary~\ref{cor:horn-staged-trees}, for a floret $f$,
   let $H_{f}$ be the submatrix of $H$  with row~indices
 $\{i:s_i\in f\}\cup\{f\}$. Then $H$ is the vertical concatenation
of the matrices $H_f$ for~$f\in F$.
\end{remark}

\begin{example}\label{ex:minimalH}
For the tree $\T$ in Example~\ref{ex:stree}, the  Horn matrix 
$H$ of $\mathcal M_\mathcal T$ is given in Figure~\ref{fig:tH}.
The vector $\lambda$ of the Horn pair $(H,\lambda)$
is the vector of ones $(1,\ldots,1)\in\mathbb{R}^{16}$. 
The rows of $H$ are indexed by the florets and labels
$$(s_0,s_1,f_1,s_2,s_3,f_2,s_4,s_5,f_3
,s_6,s_7,f_4,s_8,s_9,f_5,s_{10},s_{11}, f_6,s_{12},s_{13},f_7).$$ 
Note that $(H,\lambda)$ is not minimal. 
Following the recipe in Lemma~\ref{minimal-reduction},
we can delete the rows ${s_0,s_1,f_2,f_3}$
of the matrix $H$ by summing the pairs $(s_0,f_2)$ and $(s_1,f_3)$ and deleting zero rows. The result is the minimal Horn pair $(H', \lambda')$, where $\lambda' = (-1,\dotsc, -1)$.
\end{example}

Two staged trees $\T$ and $\T'$ are called 
\emph{statistically equivalent} in  \cite{gorgenSmith}
if there exists a bijection between the sets of root-to-leaf paths of $\T$ and
$\T'$ such that, after applying this bijection, $\mathcal{M}_{\T}=\mathcal{M}_{\T'}$ in the open simplex $\Delta_n$. A staged tree model may have different but statistically equivalent tree
representations. In \cite[Theorem~1]{gorgenSmith}, it is shown that statistical equivalence
of staged trees can be determined by a sequence of operations on the trees, named \emph{swap} and \emph{resize}. One of the advantages
of describing a staged tree model via its Horn pair is that it gives a new criterion to decide whether two staged trees are statistically equivalent. This is simpler to implement than the criterion given in~\cite{gorgenSmith}.  

\begin{corollary}
Two staged trees are statistically equivalent if and only if 
their associated Horn pairs reduce to the same minimal Horn pair.
\end{corollary}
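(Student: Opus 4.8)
The plan is to reduce the statement to two facts already established, plus one elementary observation. The two facts are: (i) by Corollary~\ref{cor:horn-staged-trees} together with Theorem~\ref{thm:main} (and Proposition~\ref{prop:mle-staged-trees}, which guarantees rational MLE), the staged tree model $\mathcal{M}_\T$ has $(\tilde H,\tilde\lambda)$ as its Horn pair, its MLE equals the Horn map $\varphi_{(\tilde H,\tilde\lambda)}$, and hence $\mathcal{M}_\T = \operatorname{Im}\varphi_{(\tilde H,\tilde\lambda)}$ (since the MLE is a retraction onto $\mathcal{M}_\T$); and (ii) by Remark~\ref{rem:makeminimal}, a Horn pair is uniquely determined, up to permutation of its rows, by its Horn map. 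The elementary observation is that the MLE is an invariant of the model $\mathcal{M}\subseteq\Delta_n$ itself: since $\Phi(u)$ is the maximizer over $\mathcal{M}$ of the strictly concave function $p\mapsto\sum_j u_j\log p_j$, it does not depend on any chosen parametrization, in particular not on the tree $\T$.

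First I would pin down the meaning of ``agree''. The columns of the Horn matrix of a staged tree are indexed by its set $J$ of root-to-leaf paths, and statistical equivalence compares models only after a bijection $\beta\colon J\to J'$ of these path sets. Accordingly, I say that the Horn pairs of $\T$ and $\T'$ \emph{agree} if there is such a $\beta$ which, after a permutation of rows, turns $(\tilde H,\tilde\lambda)$ into $(\tilde H',\tilde\lambda')$; the row permutation is unavoidable because Horn pairs are only well-defined up to reordering rows (Remark~\ref{rem:makeminimal}).

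For the ``if'' direction, suppose the Horn pairs agree via such a $\beta$. Relabelling the coordinates of $\Delta_n$ by $\beta$, the Horn maps $\varphi_{(\tilde H,\tilde\lambda)}$ and $\varphi_{(\tilde H',\tilde\lambda')}$ become the same rational map $\RR^{n+1}_{>0}\to\RR^{n+1}_{>0}$, since a Horn map is unchanged under a permutation of the rows of its Horn matrix. By fact (i) their images are $\mathcal{M}_\T$ and $\mathcal{M}_{\T'}$, so $\mathcal{M}_\T=\mathcal{M}_{\T'}$ in $\Delta_n$ after applying $\beta$; this is exactly statistical equivalence. For the ``only if'' direction, suppose $\T$ and $\T'$ are statistically equivalent via $\beta$, i.e.\ $\mathcal{M}_\T=\mathcal{M}_{\T'}=:\mathcal{M}$ after relabelling by $\beta$. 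By model-invariance of the MLE, the MLEs of $\mathcal{M}_\T$ and $\mathcal{M}_{\T'}$ coincide after $\beta$; by fact (i) these MLEs are $\varphi_{(\tilde H,\tilde\lambda)}$ and $\varphi_{(\tilde H',\tilde\lambda')}$. Hence, after $\beta$, the two Horn pairs have the same Horn map, and fact (ii) forces $(\tilde H,\tilde\lambda)$ and $(\tilde H',\tilde\lambda')$ to coincide up to a permutation of rows. So the Horn pairs agree.

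The step needing the most care is the bookkeeping of the two sources of ambiguity in a Horn pair — the column relabelling $\beta$ coming from the definition of statistical equivalence, and the row permutation coming from Remark~\ref{rem:makeminimal} — and checking that they interact correctly with the chain model $\leftrightarrow$ MLE $\leftrightarrow$ Horn map $\leftrightarrow$ Horn pair. Once the conventions are fixed, both implications follow immediately from the results quoted above.
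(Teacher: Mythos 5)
Your proof is correct, and it fills in the argument the paper leaves implicit (no proof is given for this corollary in the paper). The route you take — composing the bijection Horn pair $\leftrightarrow$ Horn map from Remark~\ref{rem:makeminimal}, the identification Horn map $=$ MLE from Proposition~\ref{prop:mle-staged-trees} and Corollary~\ref{cor:horn-staged-trees}, and the fact that the MLE is an invariant of $\mathcal{M}\subseteq\Delta_n$ itself — is precisely the intended argument, and your careful bookkeeping of the column bijection $\beta$ (from the definition of statistical equivalence) versus the row-permutation ambiguity (from the Horn-pair normalization) correctly pins down the otherwise undefined word ``agree'' in the statement.
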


One natural operation on a staged tree $\T$ is identifying two florets of the same size.
This gives a new staged tree $\T'$ whose Horn matrix is 
easy to get from that of~$\T$. 

\begin{corollary} \label{cor:staging}
	Let $\T'$ be a staged tree arising from $\T$ by identifying two florets $f$ and~$f'$, 
	say by the bijection $(-)'\colon f\to f'$. 
	The Horn  matrix $H'$ of $\mathcal{M}_{\T'}$ arises from 
	the Horn  matrix $H$ of $\mathcal{M}_{\T}$ by replacing the blocks 
	$H_f$ and $H_{f'}$ in $H$ by the block  $H'_f$ defined by 
	\begin{align*}
	                            h'_{ij}&= h_{ij} +h_{i'j} \;\;\text{ for } s_i \in f,\\
	                            h'_{fj}&= h_{fj}+h_{f'j}.
	\end{align*}

\end{corollary}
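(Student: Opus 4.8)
The plan is to trace through how the combinatorial data $(I, J, \mu_{ij}, F)$ of the tree changes when we pass from $\T$ to $\T'$, and then apply Corollary~\ref{cor:horn-staged-trees} to each tree. First I would set up notation: identifying the florets $f$ and $f'$ via the bijection $(-)'\colon f \to f'$ means that in $\T'$ we keep one copy of the labels, so the label set is $I' = I \setminus \{\,\ell : s_\ell \in f'\,\}$, with the understanding that every occurrence of a label $s_{i'}$ (for $s_i \in f$) in a root-to-leaf path is now read as an occurrence of $s_i$. Crucially, the set $J$ of root-to-leaf paths is unchanged, since identifying florets does not alter the underlying tree, only its labeling. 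Hence for $j \in J$ and $s_i \in f$ the new multiplicity is $\mu'_{ij} = \mu_{ij} + \mu_{i'j}$, because a path now counts both its $s_i$-steps and its (former) $s_{i'}$-steps toward the single label $s_i$; for every label $s_k$ not involved in $f$ or $f'$ we have $\mu'_{kj} = \mu_{kj}$; and the combined floret, still called $f$, has florets $F' = (F \setminus \{f, f'\}) \cup \{f\}$.

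Next I would feed these updated quantities into the formulas of Corollary~\ref{cor:horn-staged-trees}. For a row indexed by $s_i \in f$ the entry of $H'$ is $h'_{ij} = \mu'_{ij} = \mu_{ij} + \mu_{i'j} = h_{ij} + h_{i'j}$, which is exactly the first displayed equation in the statement. For the row indexed by the merged floret $f \in F'$ we get
\[
h'_{fj} \;=\; -\sum_{s_i \in f} \mu'_{ij} \;=\; -\sum_{s_i \in f}(\mu_{ij} + \mu_{i'j}) \;=\; -\sum_{s_i \in f}\mu_{ij} \;-\; \sum_{s_{i'} \in f'}\mu_{i'j} \;=\; h_{fj} + h_{f'j},
\]
which is the second displayed equation. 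For every row indexed by a label $s_k$ or a floret $g$ not among those involved in $f, f'$, the entry is unchanged since the relevant multiplicities are unchanged. Therefore the block $H'_f$ with row indices $\{\,i : s_i \in f\,\} \cup \{f\}$ replaces the two blocks $H_f$ and $H_{f'}$, while all other blocks of $H$ persist verbatim in $H'$; this is precisely the assertion about the block structure, using the vertical-concatenation description of $H$ from the remark following Corollary~\ref{cor:horn-staged-trees}.

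I expect no serious obstacle here — the argument is essentially bookkeeping. The one point that requires a little care is the claim that $J$ and all multiplicities $\mu_{kj}$ for uninvolved labels are genuinely unchanged: this rests on the fact that ``identifying two florets'' is an operation purely on the labeling $\theta\colon \mathrm{E}(\T) \to S$ and not on the tree skeleton, so that $\T$ and $\T'$ share the same set of edges and the same set of root-to-leaf paths. Once that is granted, the computation above is forced. A secondary, essentially cosmetic point is that one should note the merged labeling still yields a valid staged tree (any two florets of $\T'$ are equal or disjoint), which follows because $f$ and $f'$ were assumed identifiable and the other florets were untouched.
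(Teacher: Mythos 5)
Your proof is correct and takes essentially the same route as the paper, which simply notes that the corollary ``follows from the definition of the Horn matrices for $\mathcal{M}_{\T}$ and $\mathcal{M}_{\T'}$.'' You have merely spelled out the bookkeeping implicit in that one-line justification, tracing how $I$, $J$, $\mu_{ij}$, and $F$ transform and then applying Corollary~\ref{cor:horn-staged-trees}.
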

\begin{proof}
	This follows from the definition of the Horn matrices for $\mathcal{M}_{\T}$ and
	 $\mathcal{M}_{\T'}$.
\end{proof}

\begin{example} Let $\T'$ be the tree obtained from Example~\ref{ex:stree} by
identifying florets $f_4$ and $f_5$ in $\mathcal{T}$. Then  $\mathcal{M}_{\T'}$ is the
   independence model of two random variables with four states.
\end{example}

\smallskip
Now we turn to part (3) of Theorem~\ref{thm:main}. We 
describe the triple $(A,\Delta,{\bf m})$
for a staged tree model $\mathcal{M}_\mathcal{T}$. The pair
 $(H,\lambda)$ was given in Corollary~\ref{cor:horn-staged-trees}.
 Let $A$ be any matrix whose rows span the
left kernel of $H$, set $m=|I|+|F|$, and write $s$ for the $m$-tuple of parameters
$(s_i,s_f)_{i\in I, f\in F}$. From the Horn matrix in Corollary~\ref{cor:horn-staged-trees} we see that
\[\Delta={\bf m}\cdot \left(1-\sum_j (-1)^{\epsilon_j}
\prod_i \left(\frac{s_i}{s_f}\right)^{\mu_{ij}}
\right), \]
where $f$ depends on $i$, $\,{\bf m}=\lcm(\prod_i s_f^{\mu_{ij}}:f\in F)\,$
and $\,\epsilon_j={\sum_i \mu_{ij}}$.
The sign vector $\sigma$ for the triple $(A,\Delta,\mathbf m)$ is given by
$\sigma_i=+1$ for $i\in I$ 
and $\sigma_f=-1$ for $f\in F$.
Then $Y_{A,\sigma}^{*}$  gets mapped to $\mathcal{M}_{\T}$ via $\phi_{(\Delta,{\bf m})}$. Moreover, the
map $\phi_{\T}$ from Definition~\ref{def:stm} factors through $\phi_{(\Delta,{\bf m})}$. Indeed, if we define
 $\iota: \Theta \to Y_{A,\sigma}^{*}$ by $(s_i)_{i\in I}\mapsto (s_i,-1)_{i \in I, f\in F}$, then $\phi_{\T}=\phi_{(\Delta,{\bf m})}\circ \iota$.
The following derivation is an extension of that in
\cite[Example 3.13]{huh2014likelihood}.

\begin{example} \label{ex:4chainhorn}
Let $\mathcal{M}_{\T}$ be the $4$-chain model in Example~\ref{ex:stree}. Here the
discriminant is
$$
\begin{footnotesize}
\begin{matrix}
\Delta \,=\,
  f_1 f_2 f_3 f_4 f_5 f_6 f_7 \!\!\!\!\! & \!
  - \,s_0 s_2 s_6 s_{10} f_3 f_5 f_7 - s_0 s_2 s_6 s_{11} f_3 f_5 f_7
 - s_0 s_2 s_7 s_{12} f_3 f_5 f_6 - s_0 s_2 s_7 s_{13} f_3 f_5 f_6 \\ & -\, s_0 s_3 s_8 s_{10} f_3 f_4 f_7
 - s_0 s_3 s_8 s_{11} f_3 f_4 f_7 - s_0 s_3 s_9 s_{12} f_3 f_4 f_6 - s_0 s_3 s_9 s_{13} f_3 f_4 f_6 \\ &
 - \,s_1 s_4 s_6 s_{10} f_2 f_5 f_7 - s_1 s_4 s_6 s_{11} f_2 f_5 f_7 - s_1 s_4 s_7 s_{12} f_2 f_5 f_6
 - s_1 s_4 s_7 s_{13} f_2 f_5 f_6 \\ & \,- \,s_1 s_5 s_8 s_{10} f_2 f_4 f_7 - s_1 s_5 s_8 s_{11} f_2 f_4 f_7
 - s_1 s_5 s_9 s_{12} f_2 f_4 f_6 - s_1 s_5 s_9 s_{13} f_2 f_4 f_6.
\end{matrix}
\end{footnotesize}
$$
Our notation for the parameters matches the row labels of the Horn matrix $H$ in
Figure~\ref{fig:tH}. This polynomial of degree $7$ is irreducible, so it equals the $A$-discriminant:
$\,\Delta = \Delta_A$.
The underlying matrix $A$ has format $13 \times 21$, and we represent it by its associated
toric ideal
$$
\begin{footnotesize}
\begin{matrix}
I_A \,=&\!\!\!\! \bigl\langle\,
s_{10} - s_{11}\,, \,\,
s_1 s_5 f_2 - s_0 s_3 f_3\,, \,\,s_1 s_4 f_2 - s_0 s_2 f_3\,,\,\,
s_5 s_9 f_4 - s_4 s_7 f_5\,,  \,\,s_3 s_9 f_4 - s_2 s_7 f_5, \\
&s_{12} - s_{13}, \,
s_5 s_8 f_4 - s_4 s_6 f_5, s_3 s_8 f_4 - s_2 s_6 f_5, \,
s_9 s_{13} f_6 - s_8 s_{11} f_7,
\, s_7 s_{13} f_6 - s_6 s_{11} f_7,\\ & \! \!
s_0 s_2 s_6 s_{11} - f_1 f_2 f_4 f_6,
s_0 s_2 s_7 s_{13} - f_1 f_2 f_4 f_7,
s_0 s_3 s_8 s_{11} - f_1 f_2 f_5 f_6, 
s_0 s_3 s_9 s_{13} - f_1 f_2 f_5 f_7, \\ &
s_1 s_4 s_6 s_{11} - f_1 f_3 f_4 f_6, 
s_1 s_4 s_7 s_{13} - f_1 f_3 f_4 f_7,
s_1 s_5 s_9 s_{13} - f_1 f_3 f_5 f_7, 
s_1 s_5 s_8 s_{11} - f_1 f_3 f_5 f_6 
\bigr\rangle.
\end{matrix}
\end{footnotesize}
$$
The toric variety $Y_A = \mathcal{V}(I_A)$ has dimension $12 $ and degree $141$.
It lives in a linear space of codimension $2$ in $\PP^{20}$, where it is defined by
eight cubics and eight quartics. The dual variety $Y_A^* = \mathcal{V}(\Delta_A)$
is the above hypersurface of degree seven. We have 
$ {\bf m} = f_1 f_2 f_3 f_4 f_5 f_6 f_7$, and
$\sigma $ is the vector in $\{-1,+1\}^{21}$ that has entry $+1$ at the indices corresponding to the $s_i$ and entry $-1$ at the indices corresponding to the $f_i$. 
 \end{example}

It would be interesting to study the combinatorics of 
discriminantal triples for staged tree models. Our computations suggest
that, for many such models, the polynomial $\Delta$
is irreducible and equals the $A$-discriminant 
$\Delta_A$ of the underlying configuration~$A$. However,
this is not true for all staged trees, as seen in equation (\ref{eq:DeltaFactors})
of Example~\ref{ex:smalltree}. We close this section
with a  familiar class of models with rational MLE whose associated $\Delta$ factor.

 \begin{example}\label{ex:multinomial} The {\em multinomial distribution} encodes the
 experiment of rolling a $k$-sided die $m$ times and recording the number of times one observed the $j$-th side, for $j=1,\dotsc,k$.  The associated model $\mathcal{M}$ is the
independence model for $m$ identically distributed random variables on $k$ states. We have
 $n+1 = \binom{k+m-1}{m}$. The Horn matrix
$H$ is the $(k+1) \times (n+1)$ matrix 
whose columns are the vectors $(-m,i_1,i_2,\ldots,i_k)^T$
where $i_1,i_2,\ldots,i_k$ are nonnegative integers whose sum equals $m$.
Here,  $\, A = (1 \,\, 1  \,\cdots \, 1)$, so the $A$-discriminant
equals $\,\Delta_A = x_0+x_1+\cdots+x_k$. The following
polynomial is a multiple of $\Delta_A$:
$$ \Delta \,\,= \,\,(-x_0)^m - (x_1 + x_2 + \cdots +x_k)^m. $$
This $\Delta$, with its marked term ${\bf m} = (-x_0)^m$, encodes
the MLE for the model $\mathcal{M}$:
\[ 
\hat p_{(i_1,\dotsc,i_k)} \,\,=\,\,
\prod_{j=1}^k
\left(\frac{\sum_{|I|=m} u_{I}\cdot I_j}
{m \sum_{|I|=m} u_{I}}\right)
^ {i_j}
\]
Here, $I$ ranges over all vectors in  $\mathbb{N}^k$ that sum to $m$, and $I_j$ denotes the $j$-th entry of $I$.
\end{example}

\section{Proof of the Main Theorem}\label{sec:proof-main}

In this section we prove Theorem~\ref{thm:main}.
For a pair $(H,\lambda)$ consisting of a Horn matrix $H$ and a coefficient vector $\lambda$, 
let $\varphi$ be the rational map defined in~(\ref{eq:rationalmap}). We use $\varphi$ and $\varphi_{(H,\lambda)}$ interchangeably in this section, as well as $\phi$ and $\phi_{(\Delta,\mathbf m)}$. Recall that its $j$-th coordinate~is
\begin{equation}\label{eq:rationalmap-coord}
\varphi_j(v) \,\,= \,\,\lambda_j\, \prod_{i=1}^m \biggl(\sum_{k=0}^n h_{ik}v_k \biggr)^{h_{ij}}.
\end{equation}
For a fixed data vector $u\in \mathbb N^{n+1}$, we define the likelihood function 
for the image of $\varphi$:
\begin{equation}\label{eq:likelihood-fct}
L_u \,: \,
\mathbb R^{n+1}\to \mathbb R\,,\,\,\, v \mapsto
 \,\prod_{j=0}^{n} \varphi_j(v)^{u_j}.
\end{equation}

\begin{lemma} \label{prop:horn-map-estimates-likelihoods}
Let $H = (h_{ij})$ be a Horn matrix, $\lambda$ a vector satisfying (\ref{eq:friendly}) and
$u\in \mathbb N^{n+1}$.
Then $u$ is a critical point of its own likelihood function $L_u$. Furthermore, if $u'$ is another critical point of $L_u$, then $\varphi(u)=\varphi(u')$.
\end{lemma}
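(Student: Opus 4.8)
The plan is to compute the gradient of $\log L_u$ directly and show that its vanishing forces $v$ to be a scalar multiple of $u$. Writing $\ell_j(v) := \sum_{k=0}^n h_{jk} v_k$ for the $j$-th linear form attached to the rows of $H$, we have from \eqref{eq:rationalmap-coord} that
\[
\log L_u(v) \,=\, \sum_{i=0}^n u_i \log \varphi_i(v) \,=\, \sum_{i=0}^n u_i \log \lambda_i \,+\, \sum_{i=0}^n u_i \sum_{j=1}^m h_{ji} \log \ell_j(v)
\,=\, C \,+\, \sum_{j=1}^m \Bigl(\sum_{i=0}^n h_{ji} u_i\Bigr) \log \ell_j(v),
\]
where $C$ is a constant (here one uses that $\lambda$ and the signs work out so that each $\varphi_i(v)$ is positive on the relevant orthant; alternatively one works formally with $|\varphi_i|$). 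Differentiating with respect to $v_k$ gives
\[
\frac{\partial}{\partial v_k} \log L_u(v) \,=\, \sum_{j=1}^m \frac{(\sum_i h_{ji} u_i)\, h_{jk}}{\ell_j(v)}.
\]
So $v$ is a critical point of $L_u$ precisely when the vector $w$ with coordinates $w_j := (\sum_i h_{ji} u_i)/\ell_j(v)$ lies in the right kernel of $H$, i.e. $H^{\!\top} w$... wait, more precisely when $\sum_j h_{jk} w_j = 0$ for all $k$, that is, $w$ is orthogonal to every column of $H$, equivalently $w$ is in the kernel of the map $x \mapsto H^{\!\top}x$ acting on rows — concretely $w \in \ker(H^{\!\top})$ viewing columns of $H$ as spanning; the clean statement is $\sum_{j} h_{jk} w_j = 0$ for $k = 0,\dots,n$.

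The key observation is that $w_j = 1$ for all $j$ is always such a vector: since the columns of $H$ sum to zero, $\sum_j h_{jk}\cdot 1 = \sum_j h_{jk} = 0$. And $w_j = 1$ for all $j$ is realized by $v = u$, because then $w_j = (\sum_i h_{ji} u_i)/\ell_j(u) = \ell_j(u)/\ell_j(u) = 1$. This shows $u$ is a critical point. For uniqueness up to scaling, suppose $v$ is another critical point; I would argue that the resulting vector $w(v)$ must again be the all-ones vector. One route: the friendliness identity \eqref{eq:friendly}, namely $\sum_i \lambda_i (Hu)^{h_i} = 1$ identically in $u$, is the statement that $\varphi$ maps into the hyperplane $\{\sum_i p_i = 1\}$; differentiating this identity or exploiting the homogeneity of degree zero of each $\varphi_i$ gives Euler-type relations that pin down the critical locus. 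The cleaner route, and the one I expect the paper takes, is to invoke that $L_u$ restricted to the image model $\mathcal{M} = \operatorname{Im}\varphi$ is the usual log-likelihood $p \mapsto \sum u_i \log p_i$, which is strictly concave on $\Delta_n$ with unique maximizer — but that presumes the model is already known to be the model, which is circular at this stage of the proof. So instead I would stay on the parameter side: the map $v \mapsto w(v) = (\ell_j(u)/\ell_j(v))_j$... no — $w_j(v) = (\sum_i h_{ji}u_i)/\ell_j(v) = \ell_j(u)/\ell_j(v)$ since $\sum_i h_{ji} u_i = \ell_j(u)$. So the critical equations say exactly that the vector $(\ell_j(u)/\ell_j(v))_{j=1}^m$ is annihilated by every column of $H$.

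The main obstacle is precisely this uniqueness step: showing that $(\ell_1(u)/\ell_1(v), \dots, \ell_m(u)/\ell_m(v))$ being orthogonal to all columns of $H$ forces it to be the all-ones vector (equivalently $\ell_j(v) = \ell_j(u)$ for all $j$, up to a global scalar, whence $v = cu$ since $H$ acts injectively enough on the relevant coordinates — one needs that the $\ell_j$ together separate points, which follows from the columns of $H$ spanning, or from the no-repeated-rows hypothesis after passing to $\tilde H$). The point is that the all-ones vector spans the intersection of $\operatorname{rowspace}(A)$-type conditions with the positive orthant image: since $(1,\dots,1)$ is in the row span of any $A$ whose rows span the left kernel of $H$, and the left kernel of $H$ is exactly the space of linear relations among columns of $H$, a positivity argument (the vector $w(v)$ has all positive entries when $v$ is positive, and lies in the left kernel of $H$, and $(1,\dots,1)$ is — up to the lattice — the unique positive element of that kernel lying on the toric variety) should close it. I would set this up by noting $w(v) \in \ker$ and $w(v)$ positive; combined with the structure of $Y_A$, positivity singles out one ray. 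I expect the cleanest writeup phrases uniqueness via: $\log L_u$ is, after the substitution, a sum of terms $c_j \log \ell_j(v)$ with $\sum_j c_j = 0$ where $c_j = \ell_j(u) > 0$, which is a strictly concave function composed with a linear map — handle the strict concavity carefully on the open orthant cut out by positivity of the $\ell_j$, and conclude the critical point is unique there.
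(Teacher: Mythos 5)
Your calculation showing that $v=u$ is a critical point is the same one the paper uses (they differentiate $L_u$ rather than $\log L_u$, but the two are equivalent), and it is correct. The gap is in the uniqueness step, which you yourself flag as ``the main obstacle'' and then do not close. The paper closes it with a one-line citation: the Zariski closure of $\operatorname{Im}\varphi$ has ML-degree one by \cite[Theorem~1]{huh14}, and ML-degree one means the likelihood function has a unique critical point over $\mathbb C$, hence a fortiori over $\mathbb R$ up to scaling. You correctly identify that appealing to strict concavity of $p\mapsto\sum u_i\log p_i$ \emph{restricted to the model} would be circular, but you do not consider that the relevant fact is a theorem about the complex projective variety proved independently by Huh, so there is no circularity in the paper's route.

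Moreover, the ``cleanest writeup'' you propose at the end is internally inconsistent. You write that $\log L_u(v) = C + \sum_j c_j \log\ell_j(v)$ with $c_j = \ell_j(u) > 0$ for all $j$ and $\sum_j c_j = 0$; but these two conditions contradict each other, since a sum of strictly positive numbers cannot be zero. In fact $\sum_j c_j = \sum_j\sum_i h_{ji}u_i = \sum_i u_i\bigl(\sum_j h_{ji}\bigr) = 0$ precisely \emph{because} the $c_j$ have mixed signs (the columns of $H$ sum to zero, forcing positive and negative rows). Consequently $\sum_j c_j\log|\ell_j(v)|$ is a sum of concave \emph{and} convex terms and has no definite concavity, so the strict-concavity argument cannot get off the ground. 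Likewise, the vector $w(v)=(\ell_j(u)/\ell_j(v))_j$ does not have all positive entries in general (each coordinate is a ratio of two linear forms of matching sign, but the sign depends on $j$), so the ``positivity singles out one ray in the kernel'' idea also needs more care than the sketch suggests. These are precisely the obstructions that make the ML-degree-one input from \cite{huh14} the natural — and in this paper, the chosen — way to finish.
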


\begin{proof}
We compute the partial derivatives of $L_u$.
For $\ell = 0,\dotsc, n$ we find
\begin{align*}
\frac{\partial}{\partial v_\ell} L_u(v)
&\,\,=\,\, \sum_{j=0}^n u_j\, \frac{L_u(v)}{\varphi_j(v)}\, \frac{\partial}{\partial v_\ell} \varphi_j(v)
\\ &\,\,= \,\, \sum_{j=0}^n u_j\, \frac{L_u(v)}{\varphi_j(v)}\,
\sum_{i=1}^m h_{ij}\, \frac{\varphi_j(v)}{\sum_{k=0}^{n}h_{ik} v_k}\, h_{i\ell}
\\ & \,\,=\,\, L_u(v)\, \sum_{i=1}^m \sum_{j=0}^n \frac{u_j\, h_{ij}\, h_{i\ell}}{\sum_{k=0}^n h_{ik} v_k}
\quad = \quad  L_u(v)\, \sum_{i=1}^m \frac{h_{i\ell}\,\sum_{j=0}^n h_{ij} u_j}{\sum_{k=0}^n h_{ik} v_k}.
\end{align*}
For $v=u$, this evaluates to zero, since the sums in the fraction cancel and the $\ell$-th column of $H$ sums to zero. This shows that $u$ is a critical point.

Next, let $u'$ be another critical point of $L_u$. Using terminology from \cite[Theorem~1]{huh14}, this means that $\varphi(u')$ is a critical point of the likelihood function $L(p,u)$ of the model $\mathcal M$ defined as the image of $\varphi$. The same holds for $\varphi(u)$. By the implication (ii) to (i) in \cite[Theorem~1]{huh14}, the model $\mathcal M$ has ML degree one. This implies $\varphi(u)=\varphi(u')$.
\end{proof}

We use \cite{huh14} to explain the 
relation between models with rational MLE and Horn pairs.

\begin{proof}[Proof of Theorem~\ref{thm:main}, Equivalence of (1) and (2)]
Let $\mathcal M$ be a model with rational MLE $\Phi$. The Zariski closure of
$\mathcal M$ is a variety whose likelihood function has a unique critical point. By \cite[Theorem~1]{huh14}, there is a Horn matrix $H$ and a coefficient vector $\lambda$ such that $\varphi_{(H,\lambda)} = \Phi$. Now, the required sum-to-one and positivity conditions for $\varphi_{(H,\lambda)}$ are satisfied because they are satisfied by the MLE $\Phi$. Indeed, the MLE of any 
discrete statistical model maps positive vectors 
$u$ in $\RR^{n+1}_{> 0}$ into the simplex~$\Delta_n$.
Conversely, we claim that every Horn pair $(H,\lambda)$ specifies a nonempty model $\mathcal M$ with rational MLE. Indeed, define  $\mathcal{M}$ to be the image of $\varphi_{(H,\lambda)}$. By the defining properties of the Horn pair, we have $\mathcal M \subset \Delta_n$. Lemma \ref{prop:horn-map-estimates-likelihoods} shows that $\varphi_{(H,\lambda)}$ is the MLE of $\mathcal M$.
\end{proof}

Next, we relate Horn pairs to discriminantal triples.

\begin{proof}[Proof of Theorem~\ref{thm:main}, Equivalence of (2) and (3)]
We already exhibited a bijection between pairs $(H,\lambda)$ and pairs
$(\Delta, \mathbf m)$ given by Equation~\ref{eq:mDelta}. The matrix $A$ is the left kernel of $H$ and forms the triple $(A,\Delta,\mathbf m)$. It is a matrix of size $r\times m$ of rank $r$. When $H$ is a Horn matrix, $A$ contains $(1,\dotsc,1)$ in its row span. This implies that the polynomial $\Delta$ is homogeneous, which in turn implies that it is $A$-homogeneous by $AH=0$.

Next, we show that the pair $(H,\lambda)$ being friendly corresponds to the polynomial $\Delta$ vanishing on $Y_A^*$. This is part of the desired equivalence.
 
\begin{claim}
The pair $(H,\lambda)$ is friendly if and only if the $A$-homogeneous polynomial $\Delta$ vanishes on the dual toric variety $Y_A^*$.
\end{claim}

\begin{subproof}[Proof of Claim]
Let $(H,\lambda)$ be friendly and $A$ as above.
The Laurent polynomial $\,q := \Delta /{\mathbf m}\,$
is a rational function on $\PP^{m-1}$ that vanishes on the dual toric variety $Y_A^*$. To see this, 
consider the exponentiation map 
$\,\varphi_2\,\colon \,\mathbb P^{m-1}\to \mathbb R^{n+1},\,
x \mapsto \lambda * x^H$,
 where $*$ is the entrywise product and $x^H\coloneqq (x^{h_0},\dotsc, x^{h_{n}})$.
 Let $f = 1 - (p_0 + \cdots + p_n)$. We have $q=f\circ \varphi_2$. 
 By \cite[Theorems~1 and~2]{huh14}, the function $\varphi_2$ maps an open 
 dense subset of $Y_A^*$ dominantly to the closure $\overline{\mathcal M}$ of the image of
  $\varphi_{(H,\lambda)}$. Since $f = 0$ on $\overline{\mathcal M}$, we have
   $f\circ \varphi_2 = 0$ on an open dense subset of $Y_A^*$, 
   hence $q = 0$ on $Y_A^*$, so $\Delta = 0$ there as well. 

Conversely, let $\Delta$ vanish
on $Y_A^*$. We claim that $q(x)$ is zero for all $x=Hu$ in the image of the
 linear map $H$. We may assume $\mathbf m (x) \neq 0$. We only need to show that $x$ is in the dual toric variety $Y_A^*$, since $\Delta$ vanishes on it. So, let $x_i = \sum_{j=0}^n h_{ij}u_j$ for $i=1,\dotsc m$. We 
 claim that  $t=(1,\dotsc,1)$ is a singular point of the hypersurface
	\[
	\gamma_A^{-1}(H_x\cap Y_A) \,\,=\,\,
	 \left\{t \in \mathbb C^r \mid \sum_{i=1}^m x_i t^{a_i} = 0\right\}.
	\]
First, the point $t$ lies on that hypersurface
since the columns of $H$ sum to zero:	
	\begin{align*}
	\sum_{i=1}^m x_i \,=\, \sum_{i=1}^m \sum_{j=0}^n h_{ij} u_j 
	\,= \,\sum_{j=0}^n u_j \sum_{i=1}^m h_{ij} \,=\, 0.
	\end{align*}
For $s=1,\dotsc,r$ we have $	\frac{\partial}{\partial t_s} t^{a_i} = a_{si} t^{a_i - e_s}$,
with $e_s$ the standard basis vector of $\mathbb Z^r$, and
	\begin{align*}
	\frac{\partial}{\partial t_s}\sum_{i=1}^m x_i t^{a_i}
	\,\,= \,\,\sum_{i=1}^m \sum_{j=0}^n h_{ij} u_j a_{si} t^{a_i-e_s}
	\,\,= \,\,\sum_{j=0}^n u_j \sum_{i=1}^m a_{si}h_{ij}t^{a_i-e_s}.
	\end{align*}
	This is zero at $t=(1,\dotsc,1)$ because  $AH = 0$.
\end{subproof}

We now prove the rest of the equivalence. Let $(H,\lambda)$ be a Horn pair, let $\varphi$ be its Horn map and let $\phi$ be the associated monomial map. Let $\mathcal M$ be the statistical model with MLE $\varphi$, so $\mathcal M = \varphi(\mathbb R_{>0}^{n+1})$. We have $\varphi = \phi \circ H$.
By Proposition~\ref{cor:sigmaexists}, there exists a unique sign vector $\sigma$ such that $\operatorname {im} H|_{\mathbb R^{n+1}_{>0}} \subseteq \mathbb R^{m}_\sigma$.
From the proof of the above claim we know that $\im H \subseteq Y_A^*$. Together, we have
\[
\mathcal M = \varphi(\mathbb R^{n+1}_{>0}) = \phi(\im H|_{\mathbb R^{n+1}_{>0}}) \subseteq \phi(Y_{A,\sigma}^*).
\]
By \cite[Theorems~1 and~2]{huh14} we have
$\phi(Y_A^*)\subseteq \mathcal M'$, where $\mathcal M'$ is the real part of
$\overline{\varphi(\mathbb C^{n+1})}$. We also have $\phi(Y_{A,\sigma}^*) \subseteq \mathbb R^{n+1}_{>0}$ by definition of the orthant. Thus $\phi(Y_{A,\sigma}^*) \subseteq \mathcal M' \cap \mathbb R^{n+1}_{>0}$.
Every element in the latter set is a fixed point of the rational function $\varphi$, by a similar argument as in Lemma~\ref{prop:horn-map-estimates-likelihoods} for complex space. Hence $\mathcal M' \cap \mathbb R_{>0}^{n+1} = \mathcal M$, so $\phi(Y_{A,\sigma}^*)\subseteq \mathcal M$.

Finally, if $(A, \Delta,\mathbf m)$ is a discriminantal triple then $(H,\lambda)$ is a Horn pair by definition. This completes the proof of Theorem~\ref{thm:main}.
\end{proof}

In the next two propositions, we formulate simple criteria to decide whether the image of the map $\varphi_{(H,\lambda)}$ associated to a Horn matrix $H$ and a coefficient vector $\lambda$ is a statistical model. These are essential for constructing models with rational MLE in Algorithm~\ref{algo:toric-to-models}.

\begin{proposition}\label{prop:positivity-domino}
	Let $(H,\lambda)$ be a friendly pair. If there exists a vector $u_0\in \mathbb R^{n+1}$ such that $\varphi(u_0)>0$, then we have $\varphi(u) > 0$ for all $u$
	in $\RR_{>0}^{n+1}$ where it is defined.
\end{proposition}

\begin{proof} 
The function  $\varphi$ is homogeneous of
degree zero. It suffices to prove each coordinate of $\varphi(u)$ is a positive real number,
 for all vectors $u$ with
positive integer entries. Indeed, every positive $u$ in $\RR^{n+1}$
 can be approximated by rational vectors, which can be scaled to be integral.
The open subset $U = \varphi^{-1}(\Delta_n)$ of~$\mathbb R^{n+1}$ contains $u_0$ by our assumptions. If $U=\mathbb R^{n+1}$, then we are done. Else, $U$ has a nonempty boundary $\partial U$.
 By continuity, $\partial U\subseteq \varphi^{-1}(\partial \Delta_n)$.
 The likelihood function $L_u$  for the data vector $u$ vanishes on~$\partial U$.

We claim that $L_u$ has a critical point in $U$. The closed subset $\overline U$ is homogeneous. Seen in projective space $\mathbb P^n$, it becomes compact.
The likelihood function $L_u$ is well defined on
this compact set in $\PP^n$, since it is homogeneous of degree zero,
and $L_u$ vanishes on the boundary.
 Hence the restriction $L_u|_U$ is either identically zero or it has a critical point in $U$. But,
  since $u_0\in U$ is a point with $L_u(u_0)\neq 0$, the second statement must be true.

Pick such a critical point $u'$. Since $U$ is open in $\mathbb R^{n+1}$, the point $u'$ is also critical point of $L_u$.
By Lemma~\ref{prop:horn-map-estimates-likelihoods} and since $u'\in U$,
we have $\varphi(u)=\varphi(u') > 0$.
\end{proof}

\begin{proposition} \label{cor:sigmaexists}
Let $(H,\lambda)$ be a friendly pair,
with no zero  or collinear rows in $H$.
Then $(H,\lambda)$ is a Horn pair if and only if
for every row $r_i$ of $H$ all nonzero entries of $r_i$ have the same sign $\sigma_i$, and
 the sign vector $\sigma = (\sigma_i)$ satisfies
$\lambda_j \sigma^{h_j} > 0$ for all columns $j$.
\end{proposition}

\begin{proof}
Let $(H,\lambda)$ be a Horn pair. Let $\ell_1,\dotsc, \ell_k$ be the linear forms corresponding to the rows in $H$ that have both positive and negative entries. Since $\ell_1$ has positive and negative coefficients, there exists a positive vector $u$ such that $\ell_1(u)=0$. Since $(H,\lambda)$ is minimal, we may choose $u>0$ such that $\ell_1(u)=0$ but $\ell_{k'}(u)\neq 0$ for all $k'\neq 1.$ The form $\ell_1$ appears in the numerator of some coordinate of $\varphi$, making this coordinate zero at $u$. But this contradicts the fact that $(H,\lambda)$ is a Horn pair. Therefore we cannot have rows with both positive and negative entries. The inequalities $\lambda_j \sigma^{h_j}>0$ then follow from the definition of a Horn pair by evaluating $\varphi(u)$ for some positive vector $u$.

Conversely, if the sign vector $\sigma$ is well-defined, the inequalities $\lambda_j\sigma^{h_j}>0$ imply that $\varphi(u)>0$ for all positive $u$. Hence $(H,\lambda)$ is a Horn pair.
\end{proof}

Every model with rational MLE {\bf arises from} a toric variety $Y_A$.
In some cases, the model {\bf is itself} a toric variety $Y_C$. 
It is crucial to distinguish the two matrices $A$ and $C$.
The two toric structures are very different.
For instance, every undirected graphical model is toric \cite[Proposition 3.3.3]{DSS}.
The toric varieties $ Y_C$ among staged tree models $\mathcal{M}_\mathcal{T}$ were classified in \cite{DG}.
The $4$-chain model $\mathcal{M}_\mathcal{T} =  Y_C$ {\bf is itself} a toric variety of
dimension~$7$ in $\PP^{15}$. But it {\bf arises from}
a toric variety $Y_A$ of dimension $12$ in $\PP^{20}$, seen in 
Example~\ref{ex:4chainhorn}.

Toric models with rational MLE play an important role 
in {\em geometric modeling} \cite{clarke2018moment, garciaSottile}. 
Given a matrix $C\in \mathbb{Z}^{r\times (n+1) }$ and a vector of weights 
$w \in \RR^{n+1}_{>0}$, one considers the 
\emph{scaled projective toric variety} $Y_{C,w}$ in $\RR \mathbb{P}^{n}$.
This is defined as the closure of the
image of 
\begin{equation}
\label{eq:monomapweights}
\gamma_{C,w} \,\,:\, (\RR^*)^{r} \to \mathbb \RR \mathbb{P}^{n}\,,\,\,\,
(t_1,\ldots,t_r) \,\mapsto \,
\biggl( \,w_0\prod_{i=1}^r t_i^{c_{i0}},
\,w_1\prod_{i=1}^r t_i^{c_{i1t}},\, \ldots \, ,
\,w_n\prod_{i=1}^r t_i^{c_{in}} \biggr).
\end{equation}
The set $\mathcal{M}_{C,w}$ of positive points in $Y_{C,w}$ is  a statistical 
model  in $\Delta_{n}$.
There is a natural homeomorphism
from the toric model  $\mathcal{M}_{C,w}$
onto the polytope of $C$. This is known in geometry
as the {\em moment map}. For a reference from algebraic statistics, see~\cite[Proposition~2.1.5]{DSS}.  In geometric modeling the pair $(C,w)$ defines
\emph{toric blending functions}~\cite{toricPatches}.

It is  desirable for the toric blending functions to
 have \emph{rational linear precision} \cite{clarke2018moment, toricPatches}.
  The property is rare and it depends in a subtle way on $(C,w)$.
Garcia-Puente and Sottile \cite{garciaSottile} established the connection
to algebraic statistics. They showed that 
 rational linear precision  holds for $(C,w)$ if and only if the 
 statistical model $\mathcal{M}_{C,w}$ has rational MLE.

\begin{example}\label{ex:multinomial2} The most classical 
blending functions with rational linear precision live on
the triangle $\{x \in \RR^3_{>0}: x_1{+}x_2{+}x_3 = 1\}$. 
They  are the {\em Bernstein basis polynomials}
\begin{equation}
\label{eq:bernstein}
\frac{m!}{i!j!(m-i-j)!}x_1^i x_2^j x_3^{m-i-j} \,\, \,\text{ for}\;\;\; i,j \geq 0, \,i+j \leq m. 
\end{equation}
Here $C$ is the  $3 \times \binom{m+1}{2}$ matrix
whose columns are the vectors $(i,j,m-i-j)$. The weights are $w_{(i,j)}=\frac{m!}{i!j!(m-i-j)!}$. 
The  toric model $\mathcal{M}_{C,w}$ is the multinomial family,
where (\ref{eq:bernstein}) is the probability of observing $i$ times $1$, $j$ times $2$ and $m-i-j$ times $3$ in $m$ trials.
 This model has rational MLE, as seen in Example \ref{ex:multinomial}.
 Again, notice the distinction between the two toric varieties.
Here, $Y_A$ is a point in $\PP^m$, whereas
$ Y_C$ is a surface in $\PP^{\binom{m}{2}-1}$.
\end{example}

Clarke and Cox \cite{clarke2018moment} raise the problem of characterizing 
all pairs $(C,w)$ with rational linear precision. This was solved 
by Duarte and G\"orgen \cite{DG} for pairs arising from staged trees.
While the problem remains open in general, our theory 
in this paper offers  new tools. 
We may ask for a characterization of
discriminantal triples whose models are toric.

\section{Constructing Models with Rational MLE}\label{sec:constructing-models}

Part (3) in Theorem~\ref{thm:main} allows us to construct models with rational MLE starting from 
a matrix $A$ that defines a projective toric variety $Y_A$.
To carry out this construction effectively we propose Algorithm~\ref{algo:toric-to-models}.
In most cases, the
dual variety $Y_A^*$ is a hypersurface, and we can compute
its defining polynomial $\Delta_A$, the \emph{discriminant} \cite{gkz}.
The polynomial $\Delta$ in a discriminantal triple can be any homogeneous multiple of $\Delta_A$, 
but we just take $\Delta = \Delta_A$.
For all terms $\mathbf m$ in $\Delta_A$, we check whether 
$(A,\Delta_A,\mathbf m)$ is a discriminantal triple.
  We implemented this algorithm in {\tt Macaulay2}, and our code is available online at~\cite{github}.
  
  Lines 1 and 15 of Algorithm~\ref{algo:toric-to-models}  are computations 
  with Gr\"obner bases.  
 Executing Line 15 can be very slow. It may be 
  omitted if one is satisfied with obtaining the parametric description and MLE $\Phi^{(\ell)}$ of the model $\mathcal M_{\ell}$. For the check in Line 14, we rely on Proposition~\ref{prop:positivity-domino} for correctness. A check based on the criterion in Proposition~\ref{cor:sigmaexists} is also~possible.

\begin{algorithm}[h]\label{algo:toric-to-models}
\caption{From toric varieties to statistical models}
\SetKwInOut{Input}{Input}
\SetKwInOut{Output}{Output}
\Input{An integer matrix $A$ of size $r\times m$ with $(1,\dotsc, 1)$ in its row span}
\Output{An integer $n$ and a collection of statistical models
$\mathcal M^{(\ell)} = (\Phi^{(\ell)}, I^{(\ell)})$, \\ where 
$\Phi^{(\ell)}\colon \mathbb R^{n+1}\to\mathbb R^{n+1}$ is a rational MLE for $\mathcal M^{(\ell)}$, 
and \\
$I^{(\ell)}\subseteq \mathbb R[p_0,\dotsc, p_n]$ is the defining prime ideal of $\mathcal M^{(\ell)}$.
}
\nllabel{line:discriminant}Compute the $A$-discriminant $\Delta_A\in \ZZ[x_1,\ldots, x_m]$\;
$n\leftarrow \#\mathrm{terms}(\Delta_A)-2$\;
$\mathrm{models} \leftarrow \{\}$\;
\For{$0\leq \ell\leq n+1$}{
	$\mathbf{m}\leftarrow \mathrm{terms}(\Delta_A)_\ell$\;
	$q\leftarrow 1-{\Delta_A}/{\mathbf m}$\;
	\For{$0\leq j \leq n$}{
		$\lambda_j \leftarrow \mathrm{coefficients}(q)_j$\;
		$h_j \leftarrow \mathrm{exponent\_vectors}(q)_j$\;
		$\Phi_j^{(\ell)}\leftarrow \textbf{(}u\mapsto \lambda_j \prod_{i=1}^m (\sum_{k=0}^n h_{ik}u_k)^{h_{ij}}\textbf{)}$\;
		}
	
	$H \leftarrow (h_{ij})$\;
Choose any positive vector $v$ in $\RR^{n+1}_{>0}$\;
		\If{$\Phi_j^{(\ell)}(v)>0$ for $j=0,1,\ldots,n$}
{
			Compute the ideal $I^{(\ell)}$ of the image of $\Phi^{(\ell)}$\;
			$\mathrm{models} \leftarrow \mathrm{models}\cup \{(\Phi^{(\ell)}, I^{(\ell)})\}$\;
		}
}
\textbf{return} $\mathrm{models}$\;
\end{algorithm}

\begin{example}[$r=2,m=4$]
For distinct integers $\alpha, \beta,\gamma > 0$ with 
$\gcd(\alpha,\beta,\gamma) = 1$~let
\[
A_{\alpha,\beta,\gamma} \,=\, \begin{pmatrix}
	1& 1& 1& 1\\
	0& \alpha& \beta& \gamma\end{pmatrix}.
\]
We ran Algorithm~\ref{algo:toric-to-models} for all $613$ such matrices with $0<\alpha<\beta<\gamma\leq 17$. Line 1 computes the discriminant $\Delta_A$ of the univariate polynomial
$ f(t)= x_1  + x_2  t^\alpha + x_3  t^\beta + x_4  t^\gamma $.
The number $n+2$ of terms of these discriminants equals
$7927/613 = 12.93 $ on average. Thus a total of $7927$ candidate triples
$(A,\Delta_A,{\bf m})$  were tested in Lines 12 to 21. 
Precisely 123 of these were found to be discriminantal triples.
This is a fraction of 1.55 \%.
Hence, only 1.55 \% of the resulting complex varieties permitted by
\cite{huh14} are actually statistical models.

Here is a typical model that was discovered. Take $\alpha=1,\beta=4,\gamma = 7$.
The discriminant 
$$  \begin{matrix}  \Delta_A &=&
729 x_2^4 x_3^6-6912 x_1^3 x_3^7-8748 x_2^5 x_3^4 x_4+84672 x_1^3 x_2 x_3^5 x_4+34992 x_2^6 x_3^2 x_4^2 \\ & & -351918 x_1^3 x_2^2 x_3^3 x_4^2
-46656 x_2^7 x_4^3+518616 x_1^3 x_2^3 x_3 x_4^3 \,\,\underline{- \,823543 x_1^6 x_4^4}
\end{matrix} 
$$
has $9$ terms, so $n=7$. The term ${\bf m}$ is underlined. The
associated model is a curve of degree ten in $\Delta_7$. Its prime
ideal $I^{(\ell)}$ is generated by $18$ quadrics. Among them are
$15$ binomials that define a toric surface of degree six:
$49 p_1 p_2 - 48 p_0 p_3,3 p_0 p_4 - p_2^2,  \ldots,
          361 p_3 p_7 - 128 p_5^2$.
Inside that surface, our curve is cut out by three  quadrics, like
 $ \,26068  p_2^2 +  73728  p_0  p_5 $ $ +703836  p_0  p_6+234612  p_2  p_6+
 78204  p_4  p_6+612864  p_0  p_7+212268  p_2  p_7+78204  p_4  p_7-8379  p_7^2
$.
\end{example}

\begin{example}[$r=3,m=6$]
For any positive integers $\alpha, \beta, \gamma, \varepsilon$, we consider the matrix
$$ 	A \,\,=\,\, \begin{small} \begin{pmatrix}
		0 & \alpha & \beta & 0 & \gamma & \varepsilon \\
		0 & 0 & 0 & 1 & 1 & 1 \\
		1 & 1 & 1 & 1 & 1 & 1
	\end{pmatrix}. \end{small}	$$
The discriminant $\Delta_A$ is the {\em resultant} of two  trinomials 
$\,x_1 + x_2 t^\alpha + x_3 t^\beta \,$ and $\,	x_4 + x_5 t^\gamma + x_6 t^\varepsilon$.
We ran Algorithm~\ref{algo:toric-to-models} for all 
138 such matrices with 
$	0<\alpha<\beta\leq 17,\, 0<\gamma<\varepsilon\leq 17, \,
	\gcd(\alpha,\beta) =  \gcd(\gamma,\varepsilon) = 1$.
	The number $n+2$ of terms of these discriminants equals
2665/138 = 19.31 on average. Thus a total of 2665 candidate triples
$(A,\Delta_A,{\bf m})$  were tested in Line 13.
Precisely 93 of these are  discriminantal triples.
This is only 3.49~\%. 
\end{example}

We now shift gears by looking at polynomials
$\Delta$ that are multiples of the $A$-discriminant.

\begin{example}[$r=1,m=4$]\label{ex:mult}
We saw in Examples~\ref{ex:smalltree} and~\ref{ex:multinomial} that interesting models arise 
from the matrix $A = (1\ 1\ \cdots \ 1)$ whose toric variety is just a point. 
Any homogeneous multiple $\Delta$ of the linear form $\Delta_A = x_1 + x_2 + \cdots+ x_m$
can be the input in Line 1 of Algorithm~\ref{algo:toric-to-models}. Here, taking $\Delta = \Delta_A$ results in the  model given by the full simplex $\Delta_{m-2}$.

Let $m=4$ and abbreviate $x^a=x_{1}^{a_1} x_{2}^{a_2} x_{3}^{a_3} x_{4}^{a_4}$ 
and $|a|=a_1{+}a_2{+}a_3{+}a_4$ for $a\in \NN^4$.
We conducted experiments with two families of multiples.
The first uses binomial multipliers:
$$
\Delta \, =\,(x^{a}+x^{b})\Delta_A\quad\hbox{or}\quad \Delta = (x^{a}-x^{b})\Delta_A,
$$
where $|a|=|b| \in \{1,2,\dots,8\}$ and $\gcd(x^a,x^b)=1$.
This gives $1028$ polynomials $\Delta$. The numbers of polynomials of
 degree $2,3,4,5,6,7,8,9$ is $6,\,21,\,46,\,81,\,126,\,181,\,246,\,321$.
 For the second family we use the trinomial multiples
$$
\Delta = (x^{a}+x^{b}+x^c)\Delta_A
\quad\hbox{or}\quad \Delta = (x^{a}+x^{b}-x^c)\Delta_A,
$$
where $|a|{=}|b| {=} |c| \! \in\! \{1,2,3\}$ and $\gcd(x^a\!,x^b\!,x^c)=1$.
Each list contains $4$ quadrics, $104$ cubics and $684$ quartics.
We report our findings in a table:

\smallskip
 \begin{center}
\begin{tabular}{|c|c|c|c|}
\hline
Family & Pairs $(\Delta,\mathbf m)$ & Horn pairs & Percentage \\
\hline
$(x^{a}-x^{b})\Delta_A$ & 8212 & 12 & 0.15\%\\
$(x^{a}+x^{b})\Delta_A$ & 8218 & 0 & 0\% \\
$(x^{a}+x^{b}-x^c)\Delta_A$ & 8678 & 8 & 0.01\% \\
$(x^{a}+x^{b}+x^c)\Delta_A$ & 8968 & 0& 0\%\\
\hline
\end{tabular}
 \end{center}
 \smallskip
All $12$ Horn pairs in the first family represent the same model, up to
permuting  coordinates. All are coming from the six quadrics of the family. The model is the surface in $\Delta_4$
defined by the $2 \times 2$ minors of the matrix
$ \begin{pmatrix} p_0 & p_1 & p_2 \,\,
\\ p_0{+}p_1{+}p_2 & p_3 & p_4 \,\, \end{pmatrix} $.
This is a staged tree model similar to Example 2, but now with three choices  
at each blue node instead of two.
The eight Horn pairs in the third family represent two distinct models. Four of the eight Horn pairs represent a surface in $\Delta_5$ and the rest represent a surface in $\Delta_6.$
\end{example}

Our construction of models with rational MLE starts with families where $r$ and $m$ are fixed.
However, as the entries of the matrix $A$ go up, the number $n+1$ of states increases.
This suggests the possibility of listing all models for fixed values of $n$. Is this list finite?

\begin{problem} Suppose that $n$ is fixed. Are there only finitely many models with rational MLE
in the simplex $\Delta_n$? Can we find absolute bounds, depending only on $n$, for
the dimension, degree and number of ideal generators of the associated varieties in $\PP^n$?
\end{problem}

 Algorithm~\ref{algo:toric-to-models} is a tool for studying these questions experimentally.
At present, we do not have any clear answers, even for $n=3$, where the models
are curves in a triangle.

\subsection*{Acknowledgements}
The first author was supported by the Deutsche Forschungsgemeinschaft DFG under grant 314838170, GRK 2297 MathCoRe.

\begin{small}

\end{small}
\end{document}